\documentclass[12pt]{article}
\usepackage[utf8]{inputenc}
\usepackage{amssymb,mathtools,cite,enumerate,color,eqnarray,hyperref,amsfonts,amsmath,amsthm,setspace,tikz,verbatim,charter,booktabs,multirow}
\usepackage{authblk}
\usepackage[a4paper,margin=1.9cm,top=2.5cm,bottom=2.5cm,centering,vcentering]{geometry}
\numberwithin{equation}{section}

\definecolor{ao(english)}{rgb}{0.0, 0.0, 0.6}

\hypersetup{colorlinks=true, linkcolor=ao(english),citecolor=ao(english)}

\usepackage[normalem]{ulem}

\theoremstyle{plain}
\newtheorem{theorem}{Theorem}
\numberwithin{theorem}{section}

\newtheorem*{corollary*}{Corollary}
\newtheorem*{Example*}{Example}

\newtheorem{lemma}[theorem]{Lemma}

\newtheorem{conjecture}[theorem]{Conjecture}
\theoremstyle{definition}

\newtheorem*{def*}{Definition}
\newtheorem*{theorem*}{Theorem}

\newtheorem*{definition*}{Definition}

\theoremstyle{remark}
\newtheorem*{remark}{Remark}

\allowdisplaybreaks

    \title{Proof of a Conjecture of Cui, Gu and Tang on 18-Colored Generalized Frobenius Partitions}

\author[1]{Hirakjyoti Das}

\author[2]{Hemjyoti Nath}

\author[3]{Abhishek Sarma}

\affil[1]{Department of Mathematics, B. Borooah College (Autonomous), Guwahati 781007, Assam, India\\
\texttt{hirak@bborooahcollege.ac.in}}

\affil[2]{Department of Mathematics, University of Florida, P.O. Box 118105, Gainesville, FL 32611-8105, USA\\
\texttt{h.nath@ufl.edu}}

\affil[3]{Department of Basic Sciences and Humanities, Assam Skill University, Mangaldai 784125, Assam, India\\
\texttt{abhitezu002@gmail.com}}

\date{}

\begin{document}

\maketitle

\begin{abstract}
Recently, the study of the number of $k$-colored generalized Frobenius partitions, denoted by $c\phi_k(n)$, has witnessed renewed interest. In this paper, we investigate congruence properties of $c\phi_{16}(n)$ and $c\phi_{18}(n)$. Our main result is a proof of the conjecture of Cui, Gu, and Tang \cite{CGT25} that, for all $n\ge0$,
$c\phi_{18}(3n+2)\equiv0\pmod{2187}$. The proof uses a $(p,k)$-parametrization together with $q$-series identities and dissections. We also establish congruences for $c\phi_{16}(n)$ modulo $1024$ and $2048$, and for $c\phi_{18}(n)$ modulo $8$ and $81$.
\end{abstract}

\noindent\textbf{Keywords:} Generalized Frobenius partitions, $k$-colored generalized Frobenius partitions, congruences, $q$-series, generating functions, ($p$-$k$) parametrization.

\medskip

\noindent\textbf{2020 Mathematics Subject Classification:} 05A17, 11P83.
	
	\section{Introduction}
    In 1984, Andrews \cite{And84} introduced the generalized Frobenius partitions of $n$, which are defined as two-rowed arrays of non-negative integers $a_j$ and $b_j$ with $a_j\ge a_{j+1}$, $b_j\ge b_{j+1}$:
    \begin{align*}
        \begin{pmatrix}
            a_1 & a_2 & \cdots & a_r\\
            b_1 & b_2 & \cdots & b_r
        \end{pmatrix}
    \end{align*}
    such that $\displaystyle{n=r+\sum_{j=1}^r}(a_j+b_j)$. In the same AMS memoir, Andrews also introduced the $k$-colored generalized Frobenius partitions of $n$, which are the generalized Frobenius partitions of $n$, whose parts appear in $k$ colors. Let $c\phi_k(n)$ denote the number of $k$-colored generalized Frobenius partitions. Andrews found that
    \begin{align*}
        \sum_{n=0}^\infty c\phi_2(n)q^n&=\dfrac{\Theta _3(q)}{(q;q)_\infty ^2} =\dfrac{(q^2;q^4)_\infty }{(q;q^2)_\infty ^4(q^4;q^4)_\infty },\\
        \sum_{n=0}^\infty c\phi_3(n)q^n&=\dfrac{1}{(q;q)_\infty ^3} \big (\Theta _3(q)\Theta _3(q^3)+\Theta _2(q)\Theta _2(q^3)\big )= \dfrac{1}{(q;q)_\infty ^3}{\left( 1+6\sum _{j=0}^\infty {\left( \dfrac{j}{3}\right) } \dfrac{q^j}{1-q^j}\right) },\\
        \sum_{n=0}^\infty c\phi_5(n)q^n&=\dfrac{1}{(q;q)_\infty ^5}{\left( 1+25\sum _{j=1}^\infty {\left( \dfrac{j}{5}\right) \dfrac{q^j}{(1-q^j)^2}} -5\sum _{j=1}^\infty {\left( \dfrac{j}{5}\right) \dfrac{jq^j}{1-q^j}}\right) },
    \end{align*}
    where for complex numbers $a$ and $q$, $\mid q\mid<1$,
    \begin{align*}
        (a;q)_\infty&:= \prod_{j=0}^\infty(1-aq^j),& f_n&:=(q^n;q^n)_\infty\\ 
        \Theta _2(q)&:=\sum _{j=-\infty }^\infty q^{(j+1/2)^2} =2q^{1/4}\dfrac{(q^4;q^4)_\infty ^2}{(q^2;q^2)_\infty }, & \Theta _3(q)&:=\sum _{j=-\infty }^\infty q^{j^2} =\dfrac{(q^2;q^2)_\infty ^5}{(q;q)_\infty ^2(q^4;q^4)_\infty ^2}
    \end{align*}
     and $\big (\frac{\cdot }{\cdot}\big )$ is the Jacobi symbol.

     Andrews' work motivated a number of researchers to investigate the generating functions and congruence properties of $c\phi_{k}(n)$. For example, in 1989 Kolitsch \cite{Kol89} obtained the generating function of $c\phi_{7}(n)$, on the other hand a few years back Baruah and Sarmah \cite{BS11,BS15} found the generating functions of $c\phi_{k}(n)$, $k\in\{4,5,6\}$. Further progress on the generating functions appeared in \cite{CWW19, JRW22, CGT24,  CGT24b, CSX25}. Lately, Cui, Gu, and Tang \cite{CGT25} worked on the integer matrix exact covering system developed by Cao \cite{Cao11} to present a uniform framework, which enabled them to find the generating functions of $c\phi_k(n)$, $k\in\{3,4,5,6,7,8,10,12,14,15,16,18,20\}$.

     Andrews himself \cite{And84} initiated the investigation of finding congruences of $c\phi_k(n)$. He found that for all $n\ge0$,
     \begin{align*}
         c\phi_2(2n+1)&\equiv 0 \pmod{2}, & c\phi_2(5n+3)&\equiv 0 \pmod{5}.
     \end{align*}
     Following his work, several authors made substantial contributions in this direction. A comprehensive overview of the developments concerning the congruence properties of $c\phi_k(n)$ has been provided by Cui, Gu, and Tang \cite[p. 779]{CGT24b}. In the same work, they also derived a generating function for $c\phi_{12}(n)$ and established three corresponding congruences. As per our knowledge, the most recent work by Wang and Wang \cite{WW25} presented an infinite family of congruences modulo arbitrary powers of 7 for $c\phi_4(n)$. 

     Following this line of renewed interest, we consider $c\phi_{16}(n)$ and $c\phi_{18}(n)$, as their congruence properties remain largely unexplored. It is worth mentioning that with increasing $k$, the generating functions associated with $c\phi_k(n)$ become more elaborate and longer. As a result, proving congruences for these functions becomes technically demanding. We establish congruences for $c\phi_{16}(n)$ modulo 1024 and 2048, and congruences for $c\phi_{18}(n)$ modulo 8, 81 and 2187. 
     
     We now state the main results of the paper.
	\begin{theorem}\label{th:1.1}
		For all $n\ge0$, we have
		\begin{align}
			c\phi_{16}\left(4n+3\right)&\equiv 0\pmod{1024},\label{eq:c_phi_1024}\\
			c\phi_{16}\left(8n+7\right)&\equiv 0\pmod{2048}.\label{eq:c_phi_2048}
		\end{align}   
	\end{theorem}

	\begin{theorem}\label{thm 18 2 power}
		For all $n\ge0$, we have
		\begin{align}
			c\phi_{18}\left(6n+5\right)&\equiv 0\pmod{8}.\label{cong: mod 8}
		\end{align}   
	\end{theorem}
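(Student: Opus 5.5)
We start from the generating function of $c\phi_{18}(n)$ obtained by Cui, Gu and Tang \cite{CGT25}, which expresses $\sum_{n\ge0}c\phi_{18}(n)q^n$ as $f_1^{-18}$ times a finite linear combination of theta-type products (products of $f_k$ with $k$ dividing $36$ or $72$, with integer coefficients). The target is the subsequence $6n+5$, that is, $n\equiv 5\pmod 6$, so we must extract both the residue $1\pmod 2$ and the residue $2\pmod 3$. The plan is to do the $2$-dissection first and the $3$-dissection second, and to reduce modulo $8$ as early as possible.

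\emph{Step 1 (simplification modulo 8).} We use the binomial congruences $f_1^{8}\equiv f_2^{4}\pmod 8$, $f_1^{4}\equiv f_2^{2}\pmod 4$ and $f_1^{2}\equiv f_2\pmod 2$. Terms in the numerator whose integer coefficient is divisible by $8$ are discarded. Terms whose coefficient is divisible by $4$ or $2$ are needed only modulo $2$ or $4$, and there the products can be reduced much further. From the prefactor we use
\[
f_1^{-18}=f_1^{-16}f_1^{-2}\equiv f_2^{-8}f_1^{-2}\pmod 8 .
\]
After this step, each surviving term is a product of an even part, a power of $f_2$ (or $f_{2k}$), and a small odd-index part such as $f_1^{-2}$, $f_1^{\pm1}f_3^{\pm1}$, or theta functions $\varphi(q^k)$ and $\psi(q^k)$.

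\emph{Step 2 (2-dissection).} For the odd-index factors we apply the standard 2-dissections of $1/f_1^2$, $f_1^2$, $f_3/f_1$, $f_1/f_3$, $f_1f_3$, $1/(f_1f_3)$, $f_3^3/f_1$ and $f_1^3/f_3$, and the relation $\varphi(q)=\varphi(q^4)+2q\psi(q^8)$. We keep the odd part, divide by $q$, and replace $q^2$ by $q$. This yields $\sum c\phi_{18}(2n+1)q^n$ modulo $8$ as a combination of eta quotients in $f_1,f_2,f_3,f_4,f_6,f_{12},\dots$.

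\emph{Step 3 (3-dissection).} We need the coefficients with $2n+1\equiv 5\pmod 6$, i.e.\ $n\equiv 2\pmod 3$. We reduce again modulo $8$, using $f_1^{3}\equiv f_3\pmod 3$ only where a coefficient is divisible by $3$ (it is not in general, so mostly this is not available). The main tools are the 3-dissections of $f_1^3$, $1/f_1^3$, $f_2/f_1^2$, $f_1^2/f_2$, $f_1f_2$, $f_4/f_1$, $\psi(q)$ and $\varphi(q)$ in terms of $a(q)$-type products $f_9,f_{18},\dots$, together with $f_1^3=a(q^3)f_3-3qf_9^3$-style identities. Extracting $q^{3n+2}$ and replacing $q^3$ by $q$ should leave an expression in which every term carries a coefficient divisible by $8$, or in which terms pair up with combined coefficient $8$. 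If the expression does not collapse directly, we expect to need one parametrization identity, for example the $(p,k)$-parametrization mentioned in the abstract. In that case we write all level-$6$ eta quotients as rational functions of $p$ times powers of $k$, and check that the resulting polynomial in $p$ has all coefficients divisible by $8$.

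\emph{Main obstacle.} The main obstacle is the sheer length of the generating function. After the two dissections many terms appear, and their cancellation modulo $8$ is not term-by-term. We would try first to organize the computation by reducing every term to the form (power of $f_2$) $\times$ (a single 3-dissectable factor) before dissecting. If the cancellation is still opaque, we fall back on the $(p,k)$ parametrization of all surviving level-$6$ quotients and verify the final congruence as an identity of polynomials in $p$ modulo $8$.
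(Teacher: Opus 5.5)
The skeleton you propose matches the paper's: reduce modulo $8$, $2$-dissect and keep the odd part, then $3$-dissect and extract $q^{3n+2}$. However, your proposal stops exactly where the real difficulty is, and your fallback does not resolve it.

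\textbf{The missing step.} After the dissections one arrives at the congruence below. This also requires handling $a(q)$, $a(q^3)$, $a(q^6)$; the paper uses $a(q)=a(q^4)+6\frac{f_4^2f_{12}^2}{f_2f_6}$, $a(q)\equiv1\pmod 2$ and $a^2(q)\equiv1\pmod 4$, none of which is in your list of $2$-dissections.
\[
\sum_{n\ge0}c\phi_{18}(6n+5)q^n\equiv 2\frac{f_3^{12}}{f_1^{19}}\left(\frac{f_1^2f_6}{f_2f_3^2}+\frac{f_2^2f_{12}}{f_4f_6^2}\right)+4\frac{f_3^9}{f_1^{10}}\pmod 8 .
\]
No term here has coefficient divisible by $8$, and no two terms coincide. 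So your expectation that the expression ``collapses'' or ``pairs up'' fails.

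Both quotients in the bracket are $\equiv1\pmod 2$. The congruence is therefore equivalent to saying that half of the bracket is $\equiv f_1^9/f_3^3\pmod 2$. Equivalently, you must know $u(q)+u(q^2)$ modulo $4$, where $u=\frac{f_1^2f_6}{f_2f_3^2}$. This needs exact input. The paper supplies it with a new level-$48$ identity, proved by modular functions on $\Gamma_0(48)$:
\[
\frac{f_1^2f_6}{f_2f_3^2}+\frac{f_2^2f_{12}}{f_4f_6^2}=2\frac{f_1f_{16}f_{24}^2}{f_3f_6f_8f_{48}}.
\]
Then $f_{16}\equiv f_8^2$, $f_{24}^2\equiv f_{48}$, $f_8\equiv f_1^8$, $f_6\equiv f_3^2\pmod 2$ turn the first term into $4\frac{f_3^9}{f_1^{10}}$, which cancels the second. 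Your proposal has no counterpart to this step.

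\textbf{Why the $(p,k)$ fallback does not work as stated.} Checking that a polynomial in $p$ has all coefficients divisible by $8$ does not fill the gap. The $(p,k)$ argument for the modulus-$2187$ congruence works because it is $3$-adic, where $2$, $p-1$, $p+2$, $2p+1$ are units. Here the argument is $2$-adic, and three things go wrong.
\begin{enumerate}
\item $p=2q\frac{f_2^3f_3^3f_{12}^6}{f_1f_4^2f_6^9}$ is itself even, so divisibility of polynomial coefficients in $p$ is the wrong test.
\item The bracket is not rational in $p$: $u=\bigl(\frac{1-p}{1+p}\bigr)^{1/2}$ and $u(q^2)=\bigl(\frac{(1-p)(1+2p)}{1+p}\bigr)^{1/4}$. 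Handling these $2$-power roots needs a genuine $2$-adic lifting argument.
\item After the mod-$2$ and mod-$4$ reductions the surviving terms have different weights, so $k$ does not cancel. For example, $f_1^9/f_3^3=\frac12(1-p)^4(1+2p)(2+p)k^3$, while the bracket has weight $0$.
\end{enumerate}

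\textbf{A possible repair at level $12$.} Write $u=1+2s$. Since $s(q^2)\equiv s(q)^2\pmod 2$, you get $u(q)+u(q^2)\equiv 2+2(s+s^2)=\frac{u^2+3}{2}\pmod 4$. Now $u^2$ is known exactly from $f_2^2f_3^4-f_1^4f_6^2=4q\frac{f_1f_2^2f_3f_{12}^3}{f_4}$. Combining this with $f_1^3f_3^3\equiv f_4^3+qf_{12}^3\pmod 2$ gives the required congruence. In any case, some exact identity of this kind must be found and proved; as written, the decisive step is missing.
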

	\begin{theorem}\label{thm 18 3 power}
		For all $n\ge0$, we have
		\begin{align}
			\label{c 18 81}c\phi_{18}\left(3n+1\right)&\equiv 0\pmod{81},\\
			\label{c 18 2187}c\phi_{18}\left(3n+2\right)&\equiv 0\pmod{2187}.
		\end{align}   
	\end{theorem}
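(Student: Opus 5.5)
We start from the generating function for $c\phi_{18}(n)$ obtained by Cui, Gu and Tang \cite{CGT25}. It is a linear combination of eta quotients in $f_1,f_2,f_3,f_6,f_9,f_{18},\dots$, divided by $f_1^{18}$. Since $18=2\cdot 9$, the numerator should be expressible through theta functions of level $18$, such as $\varphi(q)$, $\varphi(q^9)$, $\psi(q)$, $\psi(q^9)$ and the cubic theta function $a(q)$.

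The plan is to perform a $3$-dissection of the whole generating function, extract the components $q^{3n+1}$ and $q^{3n+2}$, and show that the resulting series have coefficients divisible by $81$ and $2187$ respectively.

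\textbf{Step 1: reduce the denominator.} Write $\frac{1}{f_1^{18}}=\frac{f_1^{9}}{f_1^{27}}$. Since $f_1^{27}\equiv f_3^{9}\pmod{3^{3}}$ only gives modulus $27$, this is too weak for $2187$. We therefore do not reduce modulo $3$. Instead we use the exact $3$-dissections
\[
\frac{1}{f_1^3}=\frac{f_9^3}{f_3^{12}}\left(a(q^3)^2+3q\,a(q^3)\frac{f_9^3}{f_3}\cdot(\ldots)+9q^2\frac{f_9^6}{f_3^2}\right)
\]
and the standard $3$-dissections of $f_1^3$, $\varphi(q)$, $\psi(q)$ and $f_1f_2$. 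Equivalently, we use the $(p,k)$-parametrization of Alaca--Alaca--Williams for level $6$/$18$. In that parametrization every eta quotient $f_1,f_2,f_3,f_6$ becomes $p^{\alpha}(1+p)^{\beta}(1-p)^{\gamma}(1+2p)^{\delta}(2+p)^{\epsilon}k^{m}$ times a power of $q$. The operator $U_3$ acts on these expressions through known transformations, relating the parameters at $q$ and at $q^3$.

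\textbf{Step 2: extract each residue class algebraically.} Concretely, for each summand we write its $3$-dissection as $A_0(q^3)+qA_1(q^3)+q^2A_2(q^3)$. The factors $1/f_1^{18}$ are handled via $\frac{1}{f_1^{18}}=\bigl(\frac{1}{f_1^{3}}\bigr)^6$ and multinomial expansion. Each term of the expansion carries an explicit power $3^{j}$ attached to $q^{j}$, so terms landing in residue $1$ and residue $2$ already carry factors $3$ and $9$ from this source. The remaining divisibility must come from the numerator theta functions, whose $3$-dissections contribute further factors of $3$. Collecting everything, the $q^{3n+2}$ component becomes $3^{7}$ times an explicit series. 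We aim to write that series as a rational function in $p$ (times a power of $k$, and an eta quotient in $q^3$) with integer coefficients. Replacing $q^3\to q$ then gives
\[
\sum c\phi_{18}(3n+2)q^n = 2187\,F(q)
\]
with $F\in\mathbb{Z}[[q]]$. The analogous computation gives the factor $81$ for $3n+1$.

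\textbf{Main obstacle.} The hard step is the cancellation. The naive powers of $3$ from the multinomial expansion fall short of $3^7$, so the extra divisibility appears only after combining all summands of the generating function. We will convert everything into the $(p,k)$-parametrization, add the contributions as polynomials in $p$, and verify by direct polynomial arithmetic that the resulting numerator polynomial has all coefficients divisible by $3^7$. Before attempting the algebra, we will check the congruence numerically for a few hundred terms. If the combined polynomial is too large, we will first reduce each summand modulo $3^7$ using $f_1^{3}\equiv f_3\pmod 3$-type lifts to discard terms already $\equiv0$, keeping only the few terms that need genuine cancellation.
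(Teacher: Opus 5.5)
\textbf{There is a genuine gap.} Your proposal is a plan in which the decisive step, the cancellation down to $3^7$, is deferred to ``direct polynomial arithmetic''. The premise of that arithmetic is false. You assume the $q^{3n+2}$ component can be written as an eta quotient times a \emph{rational} function of $p$, so that divisibility can be read off from the coefficients of a single numerator polynomial. In the $(p,k)$-parametrization, however, eta quotients carry fractional powers of $2$, $1-p$, $1+2p$ and $2+p$, and the terms of the extracted series fall into different cube-root classes. For example, the ratio of $\frac{f_2^{18}f_3^8}{f_1^{20}f_4^6f_6}$ to $\frac{f_2^{12}f_3^7f_6}{f_1^{17}f_4^3f_{12}}$ equals $(1+2p)/R$, where $R=\frac{f_2^4f_3^3f_{12}^3}{f_1f_4f_6^8}$ satisfies $R^3=\frac{(1-p)(1+2p)(2+p)}{2}=(1-p)^3+\frac92p(1-p)$, which is not a cube in $\mathbb{Q}(p)$.

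The paper first reduces the extracted series modulo $2187$ to $81H(q)$. It does this with the identities \eqref{Id 1}--\eqref{Id 9} and with congruences such as $f_3f_4^3\equiv f_1^3f_{12}$ and $f_2^6f_{12}^2\equiv f_4^6f_6^2 \pmod 3$. It then writes $H=M\,(13+RA(p)+R^2B(p))$. At this point coefficientwise reasoning breaks down. Since $R\equiv 1-p\pmod 3$, the elements $1,R,R^2$ are not $3$-adically independent, and $13+RA+R^2B\equiv0\pmod{27}$ holds even though $13$ is prime to $3$. The missing idea is a Hensel-type lifting of the cube root, $R\equiv\rho(p)=a+\frac{3p}{2a}-\frac{9p^2(2a^2+p)}{8a^5}\pmod{27}$ with $a=1-p$. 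This turns the claim into an honest polynomial identity $D(p)\,(13+\rho A+\rho^2B)=27p(\cdots)$, with $D(0)=2^{14}$ a $3$-adic unit.

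Several other parts of the plan would not go through as stated either.
\begin{itemize}
\item The Alaca--Alaca--Williams parametrization only covers $f_1,f_2,f_3,f_4,f_6,f_{12}$, while the generating function contains $f_9,f_{18},f_{36}$, $a(q^3)$ and $a(q^6)$. The ``known transformations'' by which $U_3$ would act on $p$ and $k$ are never specified, and the paper uses none. It dissects first with the explicit formulas of Lemma \ref{Lemma 3d}; after $q^3\to q$ only level-$12$ products remain, and only then does it parametrize.
\item Your accounting of powers of $3$ is also off. Only the $q$'s produced by $1/f_1^3$ and by $a(q)=a(q^3)+6qf_9^3/f_3$ come with factors of $3$. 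Dissections such as \eqref{3d f2/f1^2} or \eqref{3d f4/f1} move terms into residues $1$ and $2$ with coefficients prime to $3$. In addition, your displayed dissection of $1/f_1^3$ has the wrong power of $f_3$; compare \eqref{3d 1/f1^3}.
\item The modulus-$81$ case needs no such machinery and should be done explicitly rather than called ``analogous''. Reduce modulo $81$ using $a(q)\equiv1\pmod 3$ and \eqref{a(q) Id}, then apply Lemma \ref{Lemma 3d}. The $q^{3n+1}$ component is then visibly $324(\cdots)+3240q(\cdots)\equiv0\pmod{81}$.
\end{itemize}
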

    \noindent The congruence \eqref{c 18 2187} was conjectured by Cui, Gu, and Tang \cite{CGT25} and constitutes the principal result of this paper. The proof of this conjecture uses the $(p,k)$-parametrization as a central ingredient. As by-products of the proofs of Theorems \ref{thm 18 2 power} and \ref{thm 18 3 power}, we derive certain interesting $q$-product identities.

    In the next sections, we prove our theorems using $q$-series techniques. Our proofs rely on the generating functions of $c\phi_{16}(n)$ and $c\phi_{18}(n)$ by Cui, Gu, and Tang \cite{CGT25}; in particular, the proof of \eqref{c 18 2187} is completed by means of a $(p,k)$-parametrization.
	\section{Proof of Theorem \ref{th:1.1}}
	Before going to the proof, we recall the following known 2-dissections.
	\begin{lemma}\cite[Lemma 2]{matching}\label{lem-2-dis}
		We have
		\begin{align}
            f_{1}^2 &= \frac{f_{2}f_{8}^5}{f_{4}^2f_{16}^2} - 2 q \frac{f_{2} f_{16}^2}{f_{8}},\label{disf1^2}\\
            \frac{1}{f_{1}^2} &= \frac{f_{8}^5}{f_{2}^5 f_{16}^2} + 2 q \frac{f_{4}^2 f_{16}^2 }{f_{2}^5 f_{8}},\label{dis1byf1^2}\\
			f_1^4&=\dfrac{f_4^{10}}{f_2^2f_8^4}-4q\dfrac{f_2^2f_8^4}{f_4^2}\label{f_1^4},\\
			\frac{1}{f_{1}^4} &= \frac{f_{4}^{14}}{f_{2}^{14} f_{8}^4} + 4 q \frac{f_{4}^2 f_{8}^4}{f_{2}^{10}}.\label{dis1byf1^4}
		\end{align}
	\end{lemma}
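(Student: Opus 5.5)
The four identities in Lemma~\ref{lem-2-dis} are classical 2-dissections. I would derive all of them from the dissection of $f_1^2$, i.e.\ \eqref{disf1^2}, using only Gauss/Jacobi theta-function facts and product manipulations.

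\emph{Step 1: identity \eqref{disf1^2}.} Write $f_1^2 = f_1^2 f_2\cdot \frac{1}{f_2}$. By Jacobi's triple product, $\varphi(-q)=\frac{f_1^2}{f_2}=\sum_{n}(-1)^nq^{n^2}$. Splitting $n$ by parity gives
\[
\varphi(-q)=\varphi(q^4)-2q\,\psi(q^8).
\]
Here $\psi(q)=\sum_{n\ge0}q^{n(n+1)/2}=\frac{f_2^2}{f_1}$, and $\varphi(q)=\frac{f_2^5}{f_1^2f_4^2}$ (the product for $\Theta_3$ recorded in the introduction). Replacing $q$ by $q^4$ gives $\varphi(q^4)=\frac{f_8^5}{f_4^2f_{16}^2}$ and $\psi(q^8)=\frac{f_{16}^2}{f_8}$. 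Multiplying by $f_2$ then yields \eqref{disf1^2} directly.

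\emph{Step 2: identity \eqref{dis1byf1^2}.} Write
\[
\frac{1}{f_1^2}=\frac{1}{f_2^5}\,\frac{f_2^5}{f_1^2}=\frac{f_4^2}{f_2^5}\,\varphi(q).
\]
The same parity split gives $\varphi(q)=\varphi(q^4)+2q\psi(q^8)$. Substituting the product forms gives $\frac{f_8^5}{f_2^5f_{16}^2}+2q\frac{f_4^2f_{16}^2}{f_2^5f_8}$, which is \eqref{dis1byf1^2}.

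\emph{Step 3: identities \eqref{f_1^4} and \eqref{dis1byf1^4}.} Square the first two identities. For example, squaring \eqref{disf1^2} gives
\[
f_1^4=f_2^2\left(\varphi(q^4)^2+4q^2\psi(q^8)^2\right)-4q f_2^2\varphi(q^4)\psi(q^8).
\]
To collapse this to two terms I need two standard facts:
\[
\varphi(q)^2+\varphi(-q)^2=2\varphi(q^2)^2,\qquad \varphi(q)^2-\varphi(-q)^2=8q\psi(q^4)^2 .
\]
These give $\varphi(q^4)^2+4q^2\psi(q^8)^2=\varphi(q^2)^2$ (apply the first with $q\to q^2$ together with Step 1), and $\varphi(q)\psi(q^2)=\psi(q)^2$, hence $\varphi(q^4)\psi(q^8)=\psi(q^4)^2$. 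Then
\[
f_1^4=f_2^2\varphi(q^2)^2-4qf_2^2\psi(q^4)^2=\frac{f_4^{10}}{f_2^2f_8^4}-4q\frac{f_2^2f_8^4}{f_4^2},
\]
which is \eqref{f_1^4}.

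Identity \eqref{dis1byf1^4} follows the same way. Write $\frac{1}{f_1^4}=\frac{f_4^4}{f_2^{10}}\varphi(q)^2$ and use
\[
\varphi(q)^2=\varphi(q^2)^2+4q\psi(q^4)^2 ,
\]
which comes from the two facts above. Substituting products gives $\frac{f_4^{14}}{f_2^{14}f_8^4}+4q\frac{f_4^2f_8^4}{f_2^{10}}$.

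The main obstacle is the theta-function identity $\varphi(q)^2-\varphi(-q)^2=8q\psi(q^4)^2$ and its companions. I would cite these from Berndt's \emph{Ramanujan's Notebooks, Part III} (Entry 25). Alternatively, since the lemma is quoted from \cite{matching}, one can simply take it as known and check the leading coefficients numerically.
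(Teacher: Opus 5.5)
Your derivation is correct. It is a different route only in the sense that the paper gives no argument: it imports the four dissections verbatim from \cite[Lemma 2]{matching}.

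You instead derive everything from four ingredients:
\begin{itemize}
\item the parity split $\varphi(\pm q)=\varphi(q^4)\pm 2q\psi(q^8)$;
\item the product forms $\varphi(q)=f_2^5/(f_1^2f_4^2)$, $\varphi(-q)=f_1^2/f_2$ and $\psi(q)=f_2^2/f_1$;
\item the identity $\varphi(q)\psi(q^2)=\psi(q)^2$;
\item the classical relations for $\varphi(q)^2\pm\varphi(-q)^2$.
\end{itemize}
I checked each substitution, and all four identities come out exactly as stated.

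The citation keeps the paper short. Your version makes the lemma self-contained. It also shows that \eqref{f_1^4} and \eqref{dis1byf1^4} are just the squares of \eqref{disf1^2} and \eqref{dis1byf1^2}, collapsed via $\varphi(\pm q)^2=\varphi(q^2)^2\pm4q\psi(q^4)^2$.

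The ``main obstacle'' you flag actually disappears, so no appeal to Berndt is needed:
\begin{itemize}
\item $\varphi(q)^2-\varphi(-q)^2=\bigl(\varphi(q)-\varphi(-q)\bigr)\bigl(\varphi(q)+\varphi(-q)\bigr)=4q\psi(q^8)\cdot 2\varphi(q^4)=8q\psi(q^4)^2$. This follows from your Steps 1--2 together with $\varphi(q)\psi(q^2)=\psi(q)^2$ at $q\to q^4$.
\item $\varphi(q)^2+\varphi(-q)^2=2\varphi(q^2)^2$ follows by keeping only the terms of $\sum_{m,n}q^{m^2+n^2}$ with $m+n$ even and writing $m=a+b$, $n=a-b$.
\end{itemize}

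A few small corrections to the write-up:
\begin{itemize}
\item The parenthetical justification of $\varphi(q^4)^2+4q^2\psi(q^8)^2=\varphi(q^2)^2$ is off. The first fact with $q\to q^2$ does not give it by itself. Either use the first fact at $q$ together with the splits of Steps 1 and 2, whose squares sum to $2\varphi(q^4)^2+8q^2\psi(q^8)^2$. Or apply $\varphi(q)^2=\varphi(q^2)^2+4q\psi(q^4)^2$ with $q\to q^2$.
\item $\varphi(q)\psi(q^2)=\psi(q)^2$ does not come from the two facts. It is immediate from the product forms, since both sides equal $f_2^4/f_1^2$.
\item Checking leading coefficients numerically is not a proof on its own. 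Drop that fallback in favor of the argument you gave.
\end{itemize}
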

    We also require the following result, which is an easy consequence of the binomial theorem. For a prime $p$, and positive integers $k$ and $\ell$, we have
\begin{align}\label{cong-modp}
	f_{k}^{p^\ell} \equiv f_{pk}^{p^{\ell-1}} \pmod{p^\ell}.
\end{align}
In the sequel, we will use the above property without any commentary. We are now ready to prove Theorem \ref{th:1.1}.

	From \cite[Theorem 3.12]{CGT25}, we have
	\begin{align}
		\sum_{n=0}^{\infty}c\phi_{16}(n)q^n&=\dfrac{1}{f^{16}_{1}} \bigg (\dfrac{f_2^{32}J_{16}}{f_1^{16}f_{32}^2} +112q\dfrac{f_2^{30}}{f_1^8f_4^7} +112q\dfrac{f_2^{30}f_4^7}{f_1^{16}f_8^6} +1792q^2\dfrac{f_2^6J_4^{23}}{f_1^8f_8^6}\nonumber \\&\quad +4480q^2\dfrac{f_2^{20}f_4^9}{f_1^{12}f_8^2} +3632q^2\dfrac{f_2^{34}f_8^2}{f_1^{16}f_4^5}-1792q^2f_2^6f_4^9+21504q^3\dfrac{f_2^{10}f_4^{11}f_8^2}{f_1^8}\nonumber \\&\quad  +8q^4\dfrac{f_2^{32}f_4^4f_{16}^3f_{32}^2}{f_1^{16}f_8^{10}} +64q^4\dfrac{f_2^{40}J_8^4f_{16}^9}{f_1^{16}f_4^{20}f_{32}^2}+32q^4\dfrac{f_2^{40}f_8^{18}f_{32}^2}{f_1^{16}f_4^{24}f_{16}^5}\nonumber \\&\quad  +4096q^4\dfrac{f_4^8f_8^{16}f_{16}}{f_2^8f_{32}^2} +12288q^4\dfrac{f_4^{27}f_8^2}{f_2^{14}}+512q^4\dfrac{f_4^{32}f_8^2f_{32}^2}{f_2^{16}f_{16}^5}\nonumber \\&\quad +1024q^4\dfrac{f_4^{36}f_{16}^9}{f_2^{16}f_8^{12}f_{32}^2}+32768q^8\dfrac{f_4^{12}f_8^6f_{16}^3f_{32}^2}{f_2^8}\bigg ).\label{gen_c_phi_16}
	\end{align}
    \begin{proof}[Proof of \eqref{eq:c_phi_1024}]
        Modulo 1024 the above equation reduces to
	\begin{align*}
		\sum_{n=0}^{\infty}c\phi_{16}(n)q^n&\equiv\dfrac{1}{f^{16}_{1}} \bigg (\dfrac{f_2^{32}J_{16}}{f_1^{16}f_{32}^2} +112q\dfrac{f_2^{30}}{f_1^8f_4^7} +112q\dfrac{f_2^{30}f_4^7}{f_1^{16}f_8^6} +768q^2\dfrac{f_2^6J_4^{23}}{f_1^8f_8^6}+384q^2\dfrac{f_2^{20}f_4^9}{f_1^{12}f_8^2}\nonumber \\&\quad  +560q^2\dfrac{f_2^{34}f_8^2}{f_1^{16}f_4^5}+256q^2f_2^6f_4^9\nonumber +8q^4\dfrac{f_2^{32}f_4^4f_{16}^3f_{32}^2}{f_1^{16}f_8^{10}} +64q^4\dfrac{f_2^{40}J_8^4f_{16}^9}{f_1^{16}f_4^{20}f_{32}^2}\nonumber \\&\quad +32q^4\dfrac{f_2^{40}f_8^{18}f_{32}^2}{f_1^{16}f_4^{24}f_{16}^5}\nonumber +512q^4\dfrac{f_4^{32}f_8^2f_{32}^2}{f_2^{16}f_{16}^5}\nonumber\bigg )\pmod{1024}.
	\end{align*}
	Employing \eqref{dis1byf1^4} in the above equation and then extracting the terms involving the odd powers of $q$, we obtain
	\begin{align*}
		\sum_{n=0}^{\infty}c\phi_{16}(2n+1)q^n&\equiv 112\frac{f_2^{119}}{f_1^{82} f_4^{38}}+32\frac{f_2^{100}f_8 }{f_1^{76} f_4^{24} f_{16}^2}+112\frac{f_2^{77}}{f_1^{54} f_4^{24}}+512q\frac{f_2^{95}}{f_1^{74} f_4^{22}}+512q\frac{f_2^{76}f_8 }{f_1^{68} f_4^8 f_{16}^2}\\&\quad+256q\frac{f_2^{53}}{f_1^{46} f_4^8}+512q\frac{f_2^{11}f_4^2 }{f_1^2}+ 256q^2\frac{f_2^{104}f_8^3 f_{16}^2 }{f_1^{76} f_4^{34}}\pmod{1024}.
	\end{align*}
	Next, Lemma \ref{lem-2-dis} helps us find that
	\begin{align*}
			\sum_{n=0}^{\infty}c\phi_{16}(4n+3)q^n&\equiv128\frac{f_1^{46}f_4^5 }{f_2^{18} f_8^2}+800\frac{f_1^{42}f_8^2 }{f_2^4 f_4^9}+128\frac{f_1^{30}f_2^6 }{f_4^3 f_8^2}+224\frac{f_1^{26}f_2^{20} f_8^2 }{f_4^{17}}+256\frac{f_1^{26}f_4^5 }{f_2^8 f_8^2}+512 f_2^7\\&\quad+512q\frac{f_1^{50}f_4^7 f_8^2 }{f_2^{28}}\\
            &\equiv800\frac{f_1^{42}f_8^2 }{f_2^4 f_4^9}+256\frac{f_1^{30}f_2^6 }{f_4^3 f_8^2}+224\frac{f_1^{26}f_2^{20} f_8^2 }{f_4^{17}}+256\frac{f_1^{26}f_4^5 }{f_2^8 f_8^2}+512 f_2^7+512q\frac{f_1^{50}f_4^7 f_8^2 }{f_2^{28}}\\
            &\equiv800\frac{f_1^{10}f_2^{12}f_8^2}{f_4^9}+224\frac{f_1^{26}f_2^{20} f_8^2 }{f_4^{17}}+512\frac{f_1^{26}f_4^5 }{f_2^8 f_8^2}+512 f_2^7+512q\frac{f_1^{50}f_4^7 f_8^2 }{f_2^{28}}\\
			&\equiv224\frac{f_2^{36} f_8^2 }{f_1^6f_4^{17}}-224\frac{f_1^{10}f_2^{12}f_8^2}{f_4^9}+512q\frac{f_4^7 f_8^2}{f_2^{3}}\pmod{1024}.
	\end{align*}
	Now, we observe that
	\begin{align}
	224\frac{ f_2^{36} f_8^2}{f_1^6 f_4^{17}}-224\frac{ f_1^{10} f_2^{12} f_8^2}{f_4^9}=
		224\frac{ f_2^{12}  f_8^2}{f_1^6 f_4^{17}}\left(f_2^{12}-f_1^8 f_4^4\right)\left(f_2^{12}+f_1^8 f_4^4\right).\label{c_phi16observe}
	\end{align}
	Rearranging the terms in \eqref{dis1byf1^4} and \eqref{f_1^4}, we have
	\begin{align}
		f_2^{12}&=\frac{f_1^4f_4^{14}}{f_2^2f_8^4}+4qf_1^4f_2^2f_4^2f_8^4,\label{modifieddis1byf1^4}\\
		f_1^8f_4^4&=\frac{f_1^4f_4^{14}}{f_2^2f_8^4}-4qf_1^4f_2^2f_4^2f_8^4,\label{modifiedf1^4}
	\end{align} respectively. Clearly \eqref{modifieddis1byf1^4} and \eqref{modifiedf1^4} give
	\begin{align}
		f_2^{12}+f_1^8f_4^4&=2\frac{f_1^4f_4^{14}}{f_2^2f_8^4},\label{cphi_16add}\\
		f_2^{12}-f_1^8f_4^4&=8qf_1^4f_2^2f_4^2f_8^4.\label{cphi_16subtract}
	\end{align}

	Invoking \eqref{cphi_16add} and \eqref{cphi_16subtract} in \eqref{c_phi16observe}, we find that
	\begin{align*}
	\sum_{n=0}^{\infty}c\phi_{16}(4n+3)q^n&\equiv 224\frac{f_2^{12}f_8^2}{f_1^6f_4^{17}}(8qf_1^4f_2^2f_4^2f_8^4)\left(2\dfrac{f_1^4f_4^{14}}{f_2^2f_8^4}\right)+512q\frac{f_4^7 f_8^2}{f_2^{3}}\\
	&\equiv 512qf_2^{19}+512qf_2^{19}\equiv 0\pmod{1024},
\end{align*}
which is
\eqref{eq:c_phi_1024}.
\end{proof}

\begin{proof}[Proof of \eqref{eq:c_phi_2048}]
    
Taking modulo 2048 in \eqref{gen_c_phi_16}, we have
\begin{align*}
    \sum_{n=0}^{\infty}c\phi_{16}(n)q^n&\equiv\dfrac{1}{f^{16}_{1}} \bigg (\dfrac{f_2^{32}f_{16}}{f_1^{16}f_{32}^2} +112q\dfrac{f_2^{30}}{f_1^8f_4^7} +112q\dfrac{f_2^{30}f_4^7}{f_1^{16}f_8^6} +1792q^2\dfrac{f_2^6J_4^{23}}{f_1^8f_8^6}+384q^2\dfrac{f_2^{20}f_4^9}{f_1^{12}f_8^2}\\&\quad+1584q^2\dfrac{f_2^{34}f_8^2}{f_1^{16}f_4^5}-1792q^2f_2^6f_4^9+1024q^3\dfrac{f_2^{10}f_4^{11}f_8^2}{f_1^8}+64q^4\dfrac{f_2^{40}J_8^4f_{16}^9}{f_1^{16}f_4^{20}f_{32}^2} \\&\quad+32q^4\dfrac{f_2^{40}f_8^{18}f_{32}^2}{f_1^{16}f_4^{24}f_{16}^5}+512q^4\dfrac{f_4^{32}f_8^2f_{32}^2}{f_2^{16}f_{16}^5} +1024q^4\dfrac{f_4^{36}f_{16}^9}{f_2^{16}f_8^{12}f_{32}^2}\bigg)\pmod{2048}.
\end{align*}
Employing Lemma \ref{lem-2-dis} in the above and then extracting the terms involving the odd powers of $q$, we obtain
	\begin{align*}
		\sum_{n=0}^{\infty}c\phi_{16}(2n+1)q^n&\equiv\frac{f_2^{32}f_{16} }{f_1^{32} f_{32}^2}+112q\frac{f_2^{30} f_4^7}{f_1^{32} f_8^6}+112q\frac{ f_2^{30}}{f_1^{24} f_4^7}+1792q^2\frac{ f_2^6 f_4^{23}}{f_1^{24} f_8^6}+4480q^2\frac{f_2^{20} f_4^9}{f_1^{28} f_8^2}\\
        &\quad +3632q^2\frac{f_2^{34} f_8^2}{f_1^{32} f_4^5}+21504q^3\frac{ f_2^{10} f_4^{11} f_8^2}{f_1^{24}} +32q^4\frac{f_2^{40} f_8^{18} f_{32}^2}{f_1^{32} f_4^{24} f_{16}^5}\\
        &\quad +8q^4\frac{f_2^{32} f_4^4 f_{16}^3 f_{32}^2 }{f_1^{32} f_8^{10}}\pmod{2048}.
	\end{align*}
	Thanks to Lemma \ref{lem-2-dis}, we have
	\begin{align*}
			\sum_{n=0}^{\infty}c\phi_{16}(4n+3)q^n&\equiv 112\frac{ f_2^{77}}{f_1^{54} f_4^{24}}+\frac{32 f_2^{68}f_8 }{f_1^{12} f_4^{24} f_{16}^2}+112\frac{f_1^{46} f_2^{55}}{f_4^{38}}+512q\frac{f_2^{11} f_4^2}{f_1^2}+256q\frac{ f_2^{33}}{f_1^6 f_4^8}\\
            &\quad +1536q\frac{ f_2^{42}f_8 }{f_4^8 f_{16}^2}+512q\frac{ f_2^{59} }{f_1^2 f_4^{22}}+1024q^2\frac{ f_2^{44}f_{16}^2  }{f_4^6 f_8^5}+256q^2\frac{ f_2^{68}f_8^3 f_{16}^2  }{f_1^4 f_4^{34}}\pmod{2048}.
	\end{align*}
   Applying Lemma \ref{lem-2-dis} in the above identity further gives
	\begin{align*}
			&\sum_{n=0}^{\infty}c\phi_{16}(8n+7)q^n\\&\equiv224\frac{f_2^{160}f_8^2}{f_1^{110} f_4^{53}}+1728\frac{f_2^{146}}{f_1^{106} f_4^{39} f_8^2}-224\frac{f_1^{34}f_2^{72} f_8^2 }{f_4^{45}}-832\frac{f_1^{38} f_2^{58}}{f_4^{31} f_8^2}+256\frac{f_1^{14} f_2^6 f_4}{f_8^2}\\&\quad+384\frac{f_1^{30} f_2^6}{f_4^3 f_8^2}+512\frac{f_1^6 f_2^2 f_4^5}{f_8^2}+1536\frac{f_1^{42} f_4}{f_2^8 f_8^2}+512\frac{f_1^{54} f_4^5}{f_2^{22} f_8^2}+1024q\frac{f_2^{136} f_8^2}{f_1^{102} f_4^{37}}\\&\quad-512q\frac{f_1^{42} f_2^{48} f_8^2}{f_4^{29}}-1024q\frac{f_1^{46} f_2^{34}}{f_4^{15} f_8^2}+1024q\frac{f_1^{58} f_4^7 f_8^2}{f_2^{32}}\\
            &\equiv224\frac{f_2^{160}f_8^2}{f_1^{110} f_4^{53}}+1728\frac{f_2^{146}}{f_1^{106} f_4^{39} f_8^2}-224\frac{f_1^{34} f_2^{72} f_8^2}{f_4^{45}}-832\frac{f_1^{38} f_2^{58}}{f_4^{31} f_8^2}+256\frac{f_1^{14} f_2^6 f_4}{f_8^2}\\&\quad+384\frac{f_1^{30} f_2^6}{f_4^3 f_8^2}+512\frac{f_1^6 f_2^2 f_4^5}{f_8^2}+1536\frac{f_1^{42} f_4}{f_2^8 f_8^2}+512\frac{f_1^{54} f_4^5}{f_2^{22} f_8^2}+1024q\frac{f_8^2 f_2^{136}}{f_1^{102} f_4^{37}}\\&\quad-512q\frac{f_1^{42} f_8^2 f_2^{48}}{f_4^{29}}-1024 q f_1^{38}+1024q f_1^{38}\\
            &\equiv224\frac{f_2^{160} f_8^2}{f_1^{110} f_4^{53}}-224\frac{f_1^{34} f_2^{72} f_8^2}{f_4^{45}}+1728\frac{f_2^{146}}{f_1^{106} f_4^{39} f_8^2}+1216\frac{f_1^{38} f_2^{58}}{f_4^{31} f_8^2}+256\frac{f_1^{14} f_2^6 f_4}{f_8^2}\\&\quad+384\frac{f_1^{30} f_2^6}{f_4^3 f_8^2}+512\frac{f_1^6 f_2^2 f_4^5}{f_8^2}+1536\frac{f_1^{42} f_4}{f_2^8 f_8^2}+512\frac{f_1^{54} f_4^5}{f_2^{22} f_8^2}+1024qf_1^2 f_4^{9}-512qf_1^2f_4^9\\
            &\equiv224\frac{ f_1^{18} f_2^8  f_8^2}{f_4^{21}}\left(f_2^{12}-f_1^8 f_4^4\right) \left(f_2^{12}+f_1^8 f_4^4\right)+1728\frac{ f_2^{146}}{f_1^{106} f_4^{39} f_8^2}+1728\frac{ f_1^6 f_2^{10} f_4}{f_8^2}\\&\quad+640\frac{ f_2^6 f_4^5}{f_1^2 f_8^2}+512\frac{ f_1^6  f_2^2f_4^5}{f_8^2}+1536\frac{ f_1^{42} f_4}{f_2^8 f_8^2}+512qf_1^2f_4^9\pmod{2048}.
	\end{align*}
    Invoking \eqref{cphi_16add} and \eqref{cphi_16subtract} in the above identity, we have
    \begin{align}
        \sum_{n=0}^{\infty}c\phi_{16}(8n+7)q^n 
        &\equiv1728\frac{ f_1^6 f_2^{10} f_4}{f_8^2}+1728\frac{ f_1^6 f_2^{10} f_4}{f_8^2}+640\frac{ f_2^6 f_4^5}{f_1^2 f_8^2}+512\frac{ f_1^6 f_4^5 f_2^2}{f_8^2}\notag\\&\quad+1536\frac{f_1^6 f_4^5 f_2^2}{f_8^2}+512qf_1^2f_4^9+1536qf_1^2f_4^{9} \nonumber \\
        &\equiv-320\frac{f_1^6 f_2^{10} f_4}{f_8^2}-320\frac{f_1^6 f_2^{10} f_4}{f_8^2}+640\frac{ f_2^6 f_4^5}{f_1^2 f_8^2}\nonumber\\
        &\equiv320\frac{ f_2^2 f_4 }{f_1^{10} f_8^2} \left(f_1^8 f_2^4-f_4^4\right)^2\pmod{2048}.\label{finalstepmod2048}
    \end{align}
    Now, it can be easily verified that $f_1^8 f_2^4-f_4^4\equiv0\pmod{8}$. Hence,
    \begin{align}
        f_1^8 f_2^4-f_4^4=8\sum_{n=0}^{\infty}c(n)q^n,\label{binomialmod8}
    \end{align}where $\displaystyle{\sum_{n=0}^{\infty}c(n)q^n}$ is some power series in $q$.
    
    Employing \eqref{binomialmod8} in \eqref{finalstepmod2048}, we arrive at
    \begin{align*}
         \sum_{n=0}^{\infty}c\phi_{16}(8n+7)q^n &\equiv320\times 8^2\frac{ f_2^2 f_4  }{f_1^{10} f_8^2}\left(\sum_{n=0}^{\infty}c(n)q^n\right)^2\equiv0\pmod{2048},
    \end{align*}
    which is \eqref{eq:c_phi_2048}.
    \end{proof}

    \section{Proof of Theorem \ref{thm 18 2 power}}
    The following known 2-, 3-dissections are necessary for our proofs:
    \begin{lemma}\label{Lemma 2d}\cite[Eq. (22.1.12), Lemma 3]{Hir17, matching}
    We have
    \begin{align}
        \frac{1}{f_1f_3}&=\frac{f_{8}^2 f_{12}^5}{f_{2}^2 f_{4} f_{6}^4 f_{24}^2}+q\frac{f_{4}^5 f_{24}^2}{f_{2}^4 f_{6}^2 f_{8}^2 f_{12}},\label{2dis1byf1f3}\\
        \frac{f_3}{f_1}&=\frac{f_{4} f_{6} f_{16} f_{24}^2}{f_{2}^2 f_{8} f_{12} f_{48}}+q\frac{f_{6} f_{8}^2 f_{48}}{f_{2}^2 f_{16} f_{24}}.\label{2disf3byf1}\\
        a(q)&=a(q^4)+6\dfrac{f_4^2f_{12}^2}{f_2f_6},\label{eq:Borwein}
    \end{align}
    where $\displaystyle{a(q):=\sum _{m,n=-\infty }^\infty q^{m^2+mn+n^2}}$ is one of the Borwein cubic functions.
    \end{lemma}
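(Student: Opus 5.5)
These three identities are classical; the cleanest route is to reduce each one to standard theta-function facts. We use Ramanujan's $\varphi(q)=f_2^5/(f_1^2f_4^2)$ and $\psi(q)=f_2^2/f_1$, together with the elementary 2-dissections
\begin{align*}
\varphi(q)&=\varphi(q^4)+2q\psi(q^8),\\
\psi(q)\psi(q^3)&=\varphi(q^6)\psi(q^4)+q\varphi(q^2)\psi(q^{12}).
\end{align*}

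\textbf{Identity \eqref{2dis1byf1f3}.} Since $\psi(q)=f_2^2/f_1$ and $\psi(q^3)=f_6^2/f_3$, we have
\begin{align*}
\frac{1}{f_1f_3}=\frac{\psi(q)\psi(q^3)}{f_2^2f_6^2}.
\end{align*}
Insert the 2-dissection of $\psi(q)\psi(q^3)$ above and convert each theta function to an eta-quotient. For the even part this gives
\begin{align*}
\frac{\varphi(q^6)\psi(q^4)}{f_2^2f_6^2}=\frac{f_8^2f_{12}^5}{f_2^2f_4f_6^4f_{24}^2},
\end{align*}
and the odd part is handled the same way. The dissection of $\psi(q)\psi(q^3)$ itself comes from the product of two Jacobi triple product series. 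In it, split the summation indices by parity; equivalently, apply Ramanujan's $f(a,b)f(c,d)$ formula with $ab=cd$.

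\textbf{Identity \eqref{eq:Borwein}.} Here I expect the intended identity to read $a(q)=a(q^4)+6q\,f_4^2f_{12}^2/(f_2f_6)$, i.e. $a(q)=a(q^4)+6q\psi(q^2)\psi(q^6)$; the factor $q$ appears to have been dropped. Split the double sum defining $a(q)$ according to the parities of $(m,n)$.
\begin{itemize}
\item The class with both entries even contributes exactly $a(q^4)$.
\item The three remaining classes are permuted by the automorphisms $(m,n)\mapsto(n,m)$ and $(m,n)\mapsto(m,-m-n)$ of the form $m^2+mn+n^2$. Hence they contribute equally, giving $3S$, where $S$ is the sum over $m,n$ both odd.
\end{itemize}
In $S$, substitute $m=u+v$, $n=u-v$. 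Then $m^2+mn+n^2=3u^2+v^2$, and the condition on $(m,n)$ becomes $u+v$ odd, so
\begin{align*}
S=2q\psi(q^8)\varphi(q^{12})+2q^3\varphi(q^4)\psi(q^{24}).
\end{align*}
By the $\psi(q)\psi(q^3)$ dissection with $q\to q^2$, this equals $2q\psi(q^2)\psi(q^6)$, which is the claim.

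\textbf{Identity \eqref{2disf3byf1}.} This is the main obstacle. I would first try writing
\begin{align*}
\frac{f_3}{f_1}=\frac{f_3^2}{f_1f_3}
\end{align*}
and multiplying \eqref{2dis1byf1f3} by \eqref{disf1^2} with $q\to q^3$. That produces four terms, two in each parity class. Each pair must then be collapsed to a single product, which requires auxiliary identities among $f_2,f_4,\dots,f_{48}$; these are essentially the same $\psi\varphi$ dissection relations at larger moduli. If this collapse becomes unwieldy, the fallback is to verify the identity as an identity of modular forms. Multiply both sides by a suitable eta-quotient so that each term is a holomorphic modular form of the same weight on $\Gamma_0(48)$. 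Then check agreement of $q$-expansion coefficients up to the Sturm bound.

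Alternatively, one can cite Hirschhorn's book (Eq. (22.1.12)), where the result is proved via the cubic analogue of the $\psi$-dissection.
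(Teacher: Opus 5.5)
The paper gives no proof of this lemma; it only cites Hirschhorn's book and Lemma 3 of a second reference. For \eqref{2dis1byf1f3} and \eqref{eq:Borwein} your route is therefore more self-contained, and it is correct.

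\begin{itemize}
\item Dividing $\varphi(q^6)\psi(q^4)$ and $q\varphi(q^2)\psi(q^{12})$ by $f_2^2f_6^2$ gives exactly the two stated terms.
\item The three parity classes of $(m,n)$ other than (even, even) form one orbit under $(m,n)\mapsto(n,m)$ and $(m,n)\mapsto(m,-m-n)$.
\item The substitution $m=u+v$, $n=u-v$ then gives $S=2q\psi(q^2)\psi(q^6)$.
\end{itemize}

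You are also right that \eqref{eq:Borwein} as printed lacks a factor $q$, since $a(q)-a(q^4)=6q+6q^3+12q^7+\cdots$. The paper's own application in the proof of Theorem \ref{thm 18 2 power} uses the corrected form: the term $4a^5(q^4)a(q^6)q\,f_4^2f_6^{14}/(f_1^{18}f_2f_3^6f_{12}^4)$ carries that $q$.

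One soft spot: both arguments rest on $\psi(q)\psi(q^3)=\varphi(q^6)\psi(q^4)+q\varphi(q^2)\psi(q^{12})$. That identity is true and classical, but not for the reasons you give.

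\begin{itemize}
\item Ramanujan's $ab=cd$ formula does not apply to $f(q,q^3)f(q^3,q^9)$, because $q^4\neq q^{12}$.
\item A plain parity split of the indices only yields the even part $f(q^6,q^{10})f(q^{18},q^{30})+q^4f(q^2,q^{14})f(q^6,q^{42})$ (Ramanujan's $f(a,b)$), which still has to be collapsed.
\end{itemize}

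What works is to write $4\psi(q)\psi(q^3)=\sum_{x,y\ \mathrm{odd}}q^{(x^2+3y^2-4)/8}$ and substitute $s=(x+3y)/2$, $t=(x-y)/2$. Then $x^2+3y^2=s^2+3t^2$ with $s-t\equiv 2\pmod 4$. The map $(s,t)\mapsto\bigl((s-3t)/2,(s+t)/2\bigr)$ carries the odd $(s,t)$ class bijectively onto the even one. Splitting the even class by the parity of $s/2$ gives $2\varphi(q^6)\psi(q^4)+2q\varphi(q^2)\psi(q^{12})$. Alternatively, just cite the identity.

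The genuine gap is \eqref{2disf3byf1}: you give no argument for it. Your first route does not reduce the difficulty. Multiplying \eqref{2dis1byf1f3} by \eqref{disf1^2} with $q\to q^3$, the even part requires
\begin{align*}
\frac{f_8^2f_{12}^3f_{24}^3}{f_2^2f_4f_6^3f_{48}^2}-2q^4\frac{f_4^5f_{24}f_{48}^2}{f_2^4f_6f_8^2f_{12}}=\frac{f_4f_6f_{16}f_{24}^2}{f_2^2f_8f_{12}f_{48}}.
\end{align*}
This is a level-$48$ eta-quotient identity no easier than \eqref{2disf3byf1} itself, and the odd part is similar.

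Your fallback does close the gap, and you should carry it out explicitly. Dividing \eqref{2disf3byf1} by its first term gives
\begin{align*}
\frac{f_2^2f_3f_8f_{12}f_{48}}{f_1f_4f_6f_{16}f_{24}^2}=1+q\,\frac{f_8^3f_{12}f_{48}^2}{f_4f_{16}^2f_{24}^3}.
\end{align*}
Both sides are weight-$0$ eta quotients with $\sum\delta r_\delta\equiv\sum(48/\delta)r_\delta\equiv 0\pmod{24}$ and $\prod\delta^{r_\delta}$ a rational square ($1/4$ and $1$ respectively). Hence both are modular functions on $\Gamma_0(48)$. A finite coefficient check then proves the identity, exactly as the paper does for Lemma \ref{Lemma for mod 8}.

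If you cite instead, be careful with the pointer. Equation (22.1.12) lies in the chapter of Hirschhorn's book from which the paper draws its facts about $a(q)$. It is therefore more plausibly the source of \eqref{eq:Borwein} than of \eqref{2disf3byf1}.
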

    \begin{lemma}\label{Lemma 3d}
    \cite{Hir17} We have
    \begin{align}
    f_1^3  &=a(q^3) f_3-3q f_9^3,\label{3d f1^3}\\
    \frac{1}{f_1^3}&=a^2(q^3)\frac{f_9^3}{f_3^{10}}+3 a(q^3)) q\frac{f_9^6}{f_3^{11}}+9 q^2\frac{f_9^9}{f_3^{12}},\label{3d 1/f1^3}\\
    \frac{f_1^2}{f_2} &=\frac{f_9^2}{f_{18}}-2 q\frac{f_3 f_{18}^2}{f_6 f_9},\label{3d f1^2/f2}\\
       \frac{f_2}{f_1^2} &=\frac{f_{6}^4 f_{9}^6}{f_{3}^8 f_{18}^3}+2q\frac{f_{6}^3 f_{9}^3}{f_{3}^7}+4q^2\frac{f_{6}^2 f_{18}^3}{f_{3}^6} \label{3d f2/f1^2}\\
       \frac{f_1 f_4}{f_2}&=\frac{f_3 f_{12} f_{18}^5}{f_6^2 f_9^2 f_{36}^2}-q\frac{f_9 f_{36}}{f_{18}},\label{3d f1f4/f2}\\
       \frac{f_2}{f_1 f_4}&=\frac{f_{18}^9}{f_3^2 f_9^3 f_{12}^2 f_{36}^3}+q \frac{f_6^2 f_{18}^3}{f_3^3 f_{12}^3}+q^2\frac{f_6^4 f_9^3 f_{36}^3}{f_3^4 f_{12}^4 f_{18}^3},\label{3d f2/f1f4}\\
       \frac{f_1}{f_4}&=\frac{f_6 f_9 f_{18}}{f_{12}^3}-q \frac{f_3 f_{18}^4}{f_9^2 f_{12}^3}-q^2 \frac{f_6^2 f_9 f_{36}^3}{f_{12}^4 f_{18}^2},\label{3d f1/f4}\\
      \frac{f_4}{f_1} &=\frac{f_{12} f_{18}^4}{f_3^3 f_{36}^2}+q \frac{f_6^2 f_9^3 f_{36}}{f_3^4 f_{18}^2}+2q^2 \frac{f_6 f_{18} f_{36}}{f_3^3},\label{3d f4/f1}\\
        a(q)&=a(q^3)+6q\frac{f_9^3}{f_3}.\label{3d a}
    \end{align}
    \end{lemma}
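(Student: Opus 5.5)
These identities are classical, and most can be obtained from two basic dissections of theta functions together with the arithmetic of Borwein's cubic theta functions. I would set up
\[
a(q),\qquad b(q)=\frac{f_1^3}{f_3},\qquad c(q)=3q^{1/3}\frac{f_3^3}{f_1},
\]
and use the standard facts
\[
a(q)=a(q^3)+2c(q^3),\qquad b(q)=a(q^3)-c(q^3),\qquad a^3=b^3+c^3.
\]
Since $c(q^3)=3q f_9^3/f_3$, the first fact is exactly \eqref{3d a}. Multiplying the second fact by $f_3$ gives \eqref{3d f1^3}.

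For \eqref{3d 1/f1^3}, I would write $A=a(q^3)$ and $C=c(q^3)$, so that $f_1^3=f_3(A-C)$. Then
\[
\frac{1}{f_1^3}=\frac{A^2+AC+C^2}{f_3\,(A^3-C^3)}, \qquad A^3-C^3=b(q^3)^3=\frac{f_3^9}{f_9^3}.
\]
Expanding $A^2+AC+C^2$ with $C=3qf_9^3/f_3$ gives the three displayed terms directly.

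For \eqref{3d f1^2/f2} and \eqref{3d f1f4/f2}, I would recognize $f_1^2/f_2=\varphi(-q)$ and $f_1f_4/f_2=\psi(-q)$. In the series $\sum(-1)^nq^{n^2}$, respectively $\sum q^{n(n+1)/2}$ with $q\to -q$, I would split the summation index $n$ by residue mod $3$. Each resulting piece is then summed by the Jacobi triple product.

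The reciprocals \eqref{3d f2/f1^2} and \eqref{3d f2/f1f4} come from the same cube-root-of-unity trick used above. If $\varphi(-q)=X-2qY$ with $X=f_9^2/f_{18}$ and $Y=f_3f_{18}^2/(f_6f_9)$, then
\[
\frac{1}{\varphi(-q)}=\frac{X^2+2qXY+4q^2Y^2}{X^3-8q^3Y^3}.
\]
The key point is the product identity $X^3-8q^3Y^3=\varphi(-q^3)^4/\varphi(-q^9)$. It follows by taking the product $\varphi(-q)\varphi(-\omega q)\varphi(-\omega^2q)$ over cube roots of unity and evaluating it as an eta-quotient in $q^3$. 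The analogous three-term identity for $\psi(-q)$ is handled the same way.

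For \eqref{3d f1/f4} and \eqref{3d f4/f1}, I would factor, for instance,
\[
\frac{f_4}{f_1}=\frac{f_2}{f_1^2}\cdot\frac{f_1f_4}{f_2}.
\]
I would then multiply the already established dissections \eqref{3d f2/f1^2} and \eqref{3d f1f4/f2} and collect by residue class of the exponent mod $3$. Similarly $f_1/f_4=(f_1^2/f_2)\cdot\big(f_2/(f_1f_4)\big)$.

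The main obstacle is this last simplification. The collected coefficients are sums of eta-quotients, and they must be shown to collapse to the single products displayed. If the direct algebra resists, I would fall back on the theory of modular forms. After multiplying by suitable eta-quotients, each side of an identity becomes a holomorphic modular form on $\Gamma_0(N)$ with $N\mid 144$. Equality then follows by checking the first Sturm-bound many coefficients. The same method is a uniform fallback for the product identities $X^3-8q^3Y^3=\cdots$ above.
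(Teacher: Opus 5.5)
Your proposal is correct, but it is not the paper's route: the paper gives no proof of this lemma and simply quotes every identity from Hirschhorn's book \cite{Hir17}. What you give instead is a self-contained derivation, and each step holds.

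\begin{itemize}
\item The Borwein relations give \eqref{3d a} and \eqref{3d f1^3} directly.
\item Writing $1/(A-C)=(A^2+AC+C^2)/b(q^3)^3$ with $b(q^3)^3=f_3^9/f_9^3$ reproduces \eqref{3d 1/f1^3} term by term.
\item From $\prod_j f_1(\omega^jq)=f_3^4/f_9$, the norms over cube roots of unity are $\prod_j\varphi(-\omega^jq)=f_3^8f_{18}/(f_6^4f_9^2)$ and $\prod_j\psi(-\omega^jq)=f_3^4f_{12}^4f_{18}/(f_6^4f_9f_{36})$. Dividing by them gives exactly the three products in \eqref{3d f2/f1^2} and \eqref{3d f2/f1f4}.
\end{itemize}

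The step you flag as the main obstacle is easier than you expect, because it reduces to identities the paper already proves. After replacing $q^3$ by $q$:
\begin{itemize}
\item In $\frac{f_2}{f_1^2}\cdot\frac{f_1f_4}{f_2}$, the residue-$0$ collapse is precisely \eqref{Id 4}, and the residue-$1$ and residue-$2$ collapses are both \eqref{Id 1}.
\item In $\frac{f_1^2}{f_2}\cdot\frac{f_2}{f_1f_4}$, the residue-$1$ and residue-$2$ collapses are again \eqref{Id 1}. The residue-$0$ collapse is $1-p=f_1^2f_2f_3^2f_{12}^3/(f_4f_6^7)$, with $p=2qf_2^3f_3^3f_{12}^6/(f_1f_4^2f_6^9)$ as in the proof of \eqref{c 18 2187}.
\end{itemize}
Each of these is a one-line exponent check in the $(p,k)$-parametrization the paper uses, so the modular-forms fallback and its Sturm-bound computation are unnecessary.

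The trade-off is this. The citation keeps the paper short. Your route shows that the lemma needs nothing beyond the cubic theta relations, the triple product, and a few level-$12$ eta-quotient identities of the kind the paper proves later anyway.
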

\begin{lemma}
    \cite[Sect. 22.10 and (22.1.6), (10.9.4)]{Hir17} We have
    \begin{align}
       \label{a(q) Id}a(q)&= \frac{f_1^3}{f_3}+9q\frac{f_9^3}{f_3},\\
       a(q)&=4\frac{f_2^6 f_3}{f_1^3 f_6^2}-\frac{f_1^6 f_6}{f_2^3 f_3^2}.\label{id:a(q)}
    \end{align}
\end{lemma}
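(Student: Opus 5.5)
The first identity follows directly from the $3$-dissections in Lemma \ref{Lemma 3d}. For the second, I would rewrite both sides in the $(p,k)$-parametrization and reduce it to an identity of rational functions of $p$, with a modular-forms check as a fallback.

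\textbf{First identity.} Divide \eqref{3d f1^3} by $f_3$ to get
\begin{align*}
\frac{f_1^3}{f_3}=a(q^3)-3q\frac{f_9^3}{f_3}.
\end{align*}
Adding $9q f_9^3/f_3$ to both sides gives
\begin{align*}
\frac{f_1^3}{f_3}+9q\frac{f_9^3}{f_3}=a(q^3)+6q\frac{f_9^3}{f_3}.
\end{align*}
By \eqref{3d a}, the right-hand side is $a(q)$, which is \eqref{a(q) Id}. No obstacle is expected here.

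\textbf{Second identity, main route.} Identity \eqref{id:a(q)} involves only $f_1,f_2,f_3,f_6$, so I would use the $(p,k)$-parametrization of Alaca and Williams. In it,
\begin{align*}
a(q)=k(1+4p+p^2),
\end{align*}
and each of $f_1,f_2,f_3,f_6$ is $q^{-n/24}$ times $k^{1/2}$ times a product of fractional powers of $2$, $p$, $1-p$, $1+p$, $1+2p$ and $2+p$. The key steps, in order, are:
\begin{enumerate}[(i)]
\item Form the two quotients $f_2^6f_3/(f_1^3f_6^2)$ and $f_1^6f_6/(f_2^3f_3^2)$. Each has total weight $1$ (net exponent $1+0=1$ on the eta-quotients, weighted by $\tfrac12$ each), so each equals $k$ times an explicit algebraic function of $p$.
\item Check that the powers of $q$ cancel. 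The $q$-exponent of the first quotient is $-(12-3-3-12)/24=1/4$, and a similar computation applies to the second. These fractional $q$-powers must be absorbed by the parametrization formulas.
\item Simplify the exponents of $2$, $p$, $1\pm p$, $1+2p$ and $2+p$ in each quotient. I expect all fractional exponents to cancel, leaving rational functions of $p$ that turn out to be polynomials, of the shape $(1+2p)^{a}(2+p)^{b}(1+p)^{c}$ up to constants.
\item Verify the resulting polynomial identity
\begin{align*}
4R_1(p)-R_2(p)=1+4p+p^2
\end{align*}
by direct expansion, where $kR_1(p)$ and $kR_2(p)$ are the two quotients from step (i).
\end{enumerate}

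\textbf{Main obstacle.} The hardest step is the bookkeeping in step (iii): getting the exact parametrization formulas for $f_1,f_2,f_3,f_6$, including their $2$-power and $q$-power prefactors, and checking that all fractional exponents cancel. A single exponent error would break the final polynomial check.

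\textbf{Fallback route.} If the parametrization bookkeeping proves unreliable, I would argue with modular forms instead. Each of $a(q)$, $f_2^6f_3/(f_1^3f_6^2)$ and $f_1^6f_6/(f_2^3f_3^2)$ is a holomorphic weight-$1$ modular form on $\Gamma_0(6)$ with character $\left(\frac{-3}{\cdot}\right)$. Holomorphy at the cusps would be confirmed by Ligozat's criterion. That space is finite-dimensional with a small Sturm bound, so it suffices to compare the first few coefficients of $q$ on both sides, for example $1+6q+0q^2+6q^3+\cdots$.
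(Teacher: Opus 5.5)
You derive \eqref{a(q) Id} correctly from \eqref{3d f1^3} and \eqref{3d a}. The second identity, however, cannot be proved as printed, because it is false. Every $f_n$ has constant term $1$, so the right-hand side of \eqref{id:a(q)} has constant term $4-1=3$, whereas $a(q)=1+6q+0q^2+\cdots$. Your plan breaks at step (iv). Carrying out steps (i)--(iii) with the parametrization quoted in the paper gives
\begin{align*}
\frac{f_2^6f_3}{f_1^3f_6^2}=\frac{(1+2p)(2+p)}{2}\,k,\qquad \frac{f_1^6f_6}{f_2^3f_3^2}=(1-p)^2k .
\end{align*}
Hence $4R_1-R_2=2(2+5p+2p^2)-(1-p)^2=3(1+4p+p^2)$, not $1+4p+p^2$. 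The true statement is
\begin{align*}
3a(q)=4\frac{f_2^6f_3}{f_1^3f_6^2}-\frac{f_1^6f_6}{f_2^3f_3^2}.
\end{align*}
This is what you get by eliminating $a(q^2)$ from $\frac{f_2^6f_3}{f_1^3f_6^2}=\frac12\bigl(a(q)+a(q^2)\bigr)$ and $\frac{f_1^6f_6}{f_2^3f_3^2}=2a(q^2)-a(q)$, using $a(q^2)=k(1+p+p^2)$. The paper gives only a citation to Hirschhorn and has evidently dropped the factor $3$. With that correction your parametrization argument goes through verbatim. Your fallback would also have caught the error, since the coefficient comparison fails already at $q^0$: the right-hand side is $3+18q+0q^2+\cdots$. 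Checking constant terms before committing to step (iv) would have caught it immediately.

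Two smaller points. First, in step (ii) the $q$-exponent of $f_2^6f_3/(f_1^3f_6^2)$ is $(12+3-3-12)/24=0$, not $1/4$. Both quotients are genuine eta-quotients, and the $q$-powers in the parametrization cancel exactly, so nothing needs to be absorbed. Second, the error does not affect how the paper uses \eqref{id:a(q)}. Since $3\equiv-1\pmod 4$, the corrected identity still gives $a(q)\equiv \frac{f_1^6f_6}{f_2^3f_3^2}\pmod 4$, hence $a(q)\equiv 1\pmod 2$ and $a^2(q)\equiv1\pmod 4$.
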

We also need the following new identity.
\begin{lemma}\label{Lemma for mod 8}
We have
\begin{align*}
    \frac{f_1^2 f_6}{f_2 f_3^2}+\frac{f_2^2 f_{12}}{f_4 f_6^2}&=2 \frac{f_1 f_{16} f_{24}^2}{f_3 f_6 f_8 f_{48}}.
\end{align*}
\end{lemma}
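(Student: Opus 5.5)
The plan is to verify the identity by the theory of modular functions, since both sides are eta quotients of level $48$. First I would divide through by the right-hand side. This turns the claim into
\begin{align*}
\frac{f_1 f_6^2 f_8 f_{48}}{2\, f_2 f_3 f_{16} f_{24}^2}+\frac{f_2^2 f_3 f_8 f_{12} f_{48}}{2\, f_1 f_4 f_6 f_{16} f_{24}^2}=1 .
\end{align*}
After passing from $f_n$ to $\eta(n\tau)$ (restoring the powers of $q^{1/24}$), each summand on the left is an eta quotient $\prod_{\delta\mid 48}\eta(\delta\tau)^{r_\delta}$. It has weight $0$, because $\sum r_\delta=0$. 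I would then check Newman's conditions $\sum \delta r_\delta\equiv 0$ and $\sum (48/\delta) r_\delta\equiv 0 \pmod{24}$, and that $\prod \delta^{r_\delta}$ is a square, so that each summand is a modular function on $\Gamma_0(48)$. If one of these conditions fails, I would first multiply the whole identity by a common eta quotient to repair it; the natural candidate is a power of $f_1/f_3$ or of $f_2 f_{24}/(f_6 f_8)$.

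Second, I would compute the orders of the two summands at the cusps $c/d$ of $\Gamma_0(48)$, with $d\mid 48$, using Ligozat's formula
\begin{align*}
\frac{48}{24\gcd(d^2,48)}\sum_{\delta\mid 48}\frac{\gcd(d,\delta)^2\, r_\delta}{\delta}.
\end{align*}
I would record the total pole order $N$ of $h:=(\text{first summand})+(\text{second summand})-1$, and also whether $h$ is holomorphic at $\infty$. The valence formula (Sturm bound) then reduces the identity to checking that the $q$-expansion of $h$ vanishes through $q^{N}$. That check is a finite computation which I would carry out directly with the $q$-products.

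As a more elementary backup matching the style of the paper, I would note that $f_1^2/f_2=\varphi(-q)$ and $f_2^2/f_4=\varphi(-q^2)$. With this the left-hand side is $\varphi(-q)/\varphi(-q^3)+\varphi(-q^2)/\varphi(-q^6)$. I would multiply by $f_3/f_1$ and expand using the 2-dissection \eqref{2disf3byf1} of $f_3/f_1$, together with its companion obtained from \eqref{2dis1byf1f3} for $1/(f_1f_3)$ and \eqref{disf1^2}. Comparing even and odd parts should reduce everything to known level-$24$ identities.

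I expect the main obstacle to be the cusp bookkeeping. The second summand carries $f_1$ in the denominator, so it may have poles at several cusps simultaneously, which inflates the Sturm bound. The normalization by the right-hand side is chosen precisely to keep these pole orders small.
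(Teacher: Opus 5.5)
Your proposal is correct and is essentially the paper's proof: the paper also treats the identity as a relation among weight-$0$ eta quotients on $\Gamma_0(48)$ and applies the valence formula. The only differences are that the paper normalizes by the first left-hand term (obtaining $1+t_1-2t_2$) instead of by the right-hand side, and lets Garvan's ETA package do the Newman and cusp-order bookkeeping, finding total minimal order $-2$. With your normalization, both summands already satisfy Newman's conditions ($\sum\delta r_\delta=0$, $\sum(48/\delta)r_\delta\in\{24,0\}$, $\prod\delta^{r_\delta}=2^{-2}$), so no repair is needed; each has only simple poles, at the cusps $1/16$ and $1/24$, which gives the same bound of checking $\operatorname{ord}_\infty>2$.
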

\begin{proof}
We prove the identity with Maple commands developed by Garvan \cite{Gar19}. The commands use the packages \texttt{$q$-series} and \texttt{ETA}. The instructions for the installation of the packages \texttt{$q$-series} and \texttt{ETA} are given by Garvan \cite{Gar99}. In \cite[Secion 3]{Gar19}, Garvan described how one can prove eta-function identities with Maple commands. The reader is referred to  \cite[Secion 3]{Gar19} for a detailed theory and example. 

The Dedekind eta-function is defined by
\begin{align*}
    \eta({\tau})=q^{1/24}\prod_{n=1}^\infty (1-q^n),
\end{align*}
where $\tau\in\{\tau\in\mathbb{C}: \textup{Im}(\tau)>0\}$ and $q:=e^{2\pi i \tau}$. In terms of eta-function, we need to prove that 
\begin{align*}
    \frac{\eta(\tau)^2 \eta(6\tau)}{\eta(2\tau) \eta(3\tau)^2}+\frac{\eta(2\tau)^2 \eta(12\tau)}{\eta(4\tau) \eta(6\tau)^2}-2 \frac{\eta(\tau) \eta(16\tau) \eta(24\tau)^2}{\eta(3\tau) \eta(6\tau) \eta(8\tau) \eta(48\tau)}&=0.
\end{align*}
The following commands prove the above identity.
\begin{align*}
    &>\,\texttt{with(qseries):}\\
    &>\,\texttt{with(ETA):}\\
    &>\, \texttt{gpP}\coloneqq [1, 2, 2, -1, 3, -2, 6, 1]:\\
    &>\, \texttt{gpQ}\coloneqq [2, 2, 4, -1, 6, -2, 12, 1]:\\
    &>\, \texttt{gpS}\coloneqq [1, 1, 3, -1, 6, -1, 8, -1, 16, 1, 24, 2, 48, -1]:\\
    &>\, \texttt{P}\coloneqq \texttt{gp2etaprod(gpP);}\\
    &\quad\quad \textcolor{blue}{ P\coloneqq \frac{\eta \! \left(\tau \right)^{2} \eta \! \left(6 \tau \right)}{\eta \! \left(2 \tau \right) \eta \! \left(3 \tau \right)^{2}}}\\
    &>\, \texttt{Q}\coloneqq \texttt{gp2etaprod(gpQ);}\\
    &\quad\quad \textcolor{blue}{ Q\coloneqq \frac{\eta \! \left(2 \tau \right)^{2} \eta \! \left(12 \tau \right)}{\eta \! \left(4 \tau \right) \eta \! \left(6 \tau \right)^{2}}}\\
     &>\, \texttt{S}\coloneqq \texttt{gp2etaprod(gpS);}\\
    &\quad\quad \textcolor{blue}{S\coloneqq \frac{\eta \! \left(\tau \right) \eta \! \left(16 \tau \right) \eta \! \left(24 \tau \right)^{2}}{\eta \! \left(3 \tau \right) \eta \! \left(6 \tau \right) \eta \! \left(8 \tau \right) \eta \! \left(48 \tau \right)}}\\
    &>\, \texttt{ETAid}\coloneqq \texttt{P + Q - 2S;}\\
    &\quad\quad \textcolor{blue}{\mathit{ETAid}\coloneqq \frac{\eta \! \left(\tau \right)^{2} \eta \! \left(6 \tau \right)}{\eta \! \left(2 \tau \right) \eta \! \left(3 \tau \right)^{2}}+\frac{\eta \! \left(2 \tau \right)^{2} \eta \! \left(12 \tau \right)}{\eta \! \left(4 \tau \right) \eta \! \left(6 \tau \right)^{2}}-\frac{2 \eta \! \left(\tau \right) \eta \! \left(16 \tau \right) \eta \! \left(24 \tau \right)^{2}}{\eta \! \left(3 \tau \right) \eta \! \left(6 \tau \right) \eta \! \left(8 \tau \right) \eta \! \left(48 \tau \right)}}\\
     &>\, \texttt{ETAidn}\coloneqq \texttt{etanormalid(\%)}\\
    &\quad\quad \textcolor{blue}{\mathit{ETAidn}\coloneqq 1+\frac{\eta \! \left(2 \tau \right)^{3} \eta \! \left(3 \tau \right)^{2} \eta \! \left(12 \tau \right)}{\eta \! \left(\tau \right)^{2} \eta \! \left(6 \tau \right)^{3} \eta \! \left(4 \tau \right)}-\frac{2 \eta \! \left(2 \tau \right) \eta \! \left(3 \tau \right) \eta \! \left(16 \tau \right) \eta \! \left(24 \tau \right)^{2}}{\eta \! \left(\tau \right) \eta \! \left(6 \tau \right)^{2} \eta \! \left(8 \tau \right) \eta \! \left(48 \tau \right)}}\\
    &>\, \texttt{t1}\coloneqq \texttt{op(2, ETAidn)}\\
    &\quad\quad \textcolor{blue}{\mathit{t1}\coloneqq \frac{\eta \! \left(2 \tau \right)^{3} \eta \! \left(3 \tau \right)^{2} \eta \! \left(12 \tau \right)}{\eta \! \left(\tau \right)^{2} \eta \! \left(6 \tau \right)^{3} \eta \! \left(4 \tau \right)}}\\
     &>\, \texttt{t2}\coloneqq \texttt{-op(3, ETAidn)/2}\\
    &\quad\quad \textcolor{blue}{\mathit{t2}\coloneqq \frac{\eta \! \left(2 \tau \right) \eta \! \left(3 \tau \right) \eta \! \left(16 \tau \right) \eta \! \left(24 \tau \right)^{2}}{\eta \! \left(\tau \right) \eta \! \left(6 \tau \right)^{2} \eta \! \left(8 \tau \right) \eta \! \left(48 \tau \right)}}\\
    &>\, \texttt{provemodfuncGAMMA0id(1 + t1 - 2t2, 48);}\\
    &\quad \quad \textcolor{blue}{\textup{TERM ", 1, ``of ", 3, " *****************}}\\
    &\quad \quad \textcolor{blue}{\textup{TERM ", 2, ``of ", 3, " *****************}}\\
    &\quad \quad \textcolor{blue}{\textup{TERM ", 3, ``of ", 3, " *****************}}\\
    &\quad \quad \textcolor{blue}{\textup{``mintotord = ", -2}}\\
    &\quad \quad \textcolor{blue}{\textup{``TO PROVE the identity we need to show that v[oo](ID) $>$ ", 2}}\\
    &\quad \quad \textcolor{blue}{\textup{*** There were NO errors. }}\\
    &\quad \quad \textcolor{blue}{\textup{*** o Each term was modular function on Gamma0(48).}}\\
    &\quad \quad \textcolor{blue}{\textup{*** o We also checked that the total order of each term was zero.}}\\
    &\quad \quad \textcolor{blue}{\textup{*** WARNING: some terms were constants. ***}}\\
    &\quad \quad \textcolor{blue}{\textup{See array CONTERMS.}}\\
    &\quad \quad \textcolor{blue}{\textup{To prove the identity we will need to verify if up to $q^{3}$.}}\\
    &\quad \quad \textcolor{blue}{\textup{Do you want to prove the identity? (yes/no)}}\\
    &\quad \quad \textcolor{blue}{\textup{You entered yes.}}\\
    &\quad \quad \textcolor{blue}{\textup{We verify the identity to $O(q^{98})$.}}\\
    &\quad \quad \textcolor{blue}{\textup{RESULT: The identity holds to $O(q^{98})$.}}\\
     &\quad \quad \textcolor{blue}{\textup{CONCLUSION: This proves the identity since we had only to show that v[oo](ID) $>$ 2.}}
\end{align*}
Thus, we complete the computer-assisted proof.
\end{proof}
\begin{remark}
    As an alternate proof, one may also compute that
    \begin{align*}
        \displaystyle{\frac{\eta \! \left(2 \tau \right)^{3} \eta \! \left(3 \tau \right)^{2} \eta \! \left(12 \tau \right)}{\eta \! \left(\tau \right)^{2} \eta \! \left(4 \tau \right) \eta \! \left(6 \tau \right)^{3}}}
    \end{align*}
    and 
    \begin{align*}
        \displaystyle{ \frac{\eta \! \left(2 \tau \right) \eta \! \left(3 \tau \right) \eta \! \left(16 \tau \right) \eta \! \left(24 \tau \right)^{2}}{\eta \! \left(\tau \right) \eta \! \left(6 \tau \right)^{2} \eta \! \left(8 \tau \right) \eta \! \left(48 \tau \right)}}
    \end{align*}
    are modular forms on $\Gamma_0(48)$ of weight 0 and then prove that
    \begin{align*}
        1+\frac{\eta \! \left(2 \tau \right)^{3} \eta \! \left(3 \tau \right)^{2} \eta \! \left(12 \tau \right)}{\eta \! \left(\tau \right)^{2} \eta \! \left(6 \tau \right)^{3} \eta \! \left(4 \tau \right)}-2\frac{ \eta \! \left(2 \tau \right) \eta \! \left(3 \tau \right) \eta \! \left(16 \tau \right) \eta \! \left(24 \tau \right)^{2}}{\eta \! \left(\tau \right) \eta \! \left(6 \tau \right)^{2} \eta \! \left(8 \tau \right) \eta \! \left(48 \tau \right)}=0,
    \end{align*} 
    which is equivalent to the identity in Lemma \ref{Lemma for mod 8}.. For computation, one may follow \cite[Theorems 1.64 and 1.65]{Ono04} and then use the following lemma
    \begin{lemma}\cite{MDG15}
        Let $f=\displaystyle{\sum_{n=0}^\infty a(n)q^n}$ and $g=\displaystyle{\sum_{n=0}^\infty b(n)q^n}$ be modular forms on some congruence subgroup $\Gamma$ of $SL_2(\mathbb{Z})$ of weight $k$. Suppose that $a(n)=b(n)$ for all $n\le k [SL_2(\mathbb{Z}):\{\pm\}\Gamma]/12$, then $f=g$.
    \end{lemma}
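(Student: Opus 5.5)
The statement to prove is the Sturm-type uniqueness lemma quoted from \cite{MDG15}: two weight-$k$ modular forms $f,g$ on a congruence subgroup $\Gamma$ whose $q$-coefficients agree for $n\le k[\mathrm{SL}_2(\mathbb{Z}):\{\pm1\}\Gamma]/12$ must coincide. I would reduce at once to a single form. Put $h=f-g$; it is a modular form of weight $k$ on $\Gamma$, and we are told $\mathrm{ord}_\infty(h)>k\,m/12$, where $m=[\mathrm{SL}_2(\mathbb{Z}):\{\pm1\}\Gamma]$. It then suffices to show that a nonzero weight-$k$ form on $\Gamma$ cannot vanish at $\infty$ to order exceeding $km/12$.

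The main device is the norm down to level one. Choose coset representatives $\gamma_1=I,\gamma_2,\dots,\gamma_m$ for $\{\pm1\}\Gamma\backslash \mathrm{SL}_2(\mathbb{Z})$ and define
\[
H(\tau)=\prod_{j=1}^{m}(h|_k\gamma_j)(\tau).
\]
Right multiplication by an element of $\mathrm{SL}_2(\mathbb{Z})$ permutes the cosets, and the slash action is invariant under $\Gamma$; up to a sign from $-I$, which disappears after squaring if necessary, $H$ is therefore a modular form of weight $km$ for the full group $\mathrm{SL}_2(\mathbb{Z})$. Each $h|_k\gamma_j$ is holomorphic at $\infty$ because $h$ is holomorphic at all cusps, so every factor has nonnegative order at $\infty$ when measured in $q$ (fractional orders are allowed, with width $\ge1$ for the cusp $\infty$ of $\Gamma$ itself). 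The factor $j=1$ alone contributes order $>km/12$, so $\mathrm{ord}_\infty H>km/12$. Also $H\ne0$ whenever $h\ne0$, since each factor is nonzero.

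To finish I would apply the classical valence formula for $\mathrm{SL}_2(\mathbb{Z})$: a nonzero form of weight $K$ satisfies $\mathrm{ord}_\infty+\tfrac12\mathrm{ord}_i+\tfrac13\mathrm{ord}_\rho+\sum\mathrm{ord}_P=K/12$, which forces $\mathrm{ord}_\infty\le K/12$. With $K=km$ this contradicts the bound obtained above, so $h=0$ and $f=g$.

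The step that needs the most care is the bookkeeping at $\infty$ for the other factors. One has to check that their orders, in the uniform parameter $q=e^{2\pi i\tau}$, are nonnegative and possibly fractional rather than negative, and one has to handle multiplier or character issues if $-I\notin\Gamma$ or if $k$ is odd. I would handle both by raising $H$ to a suitable power, which makes it invariant with trivial character while preserving the strict inequality.

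In the application above, $k=0$ and both eta-quotients are holomorphic at every cusp of $\Gamma_0(48)$, so the bound is trivial. For general weight-$0$ functions with poles, one instead multiplies by a suitable power of $\Delta$ (or uses the total-order argument in Garvan's \texttt{provemodfuncGAMMA0id}) to land in positive weight before invoking the lemma.
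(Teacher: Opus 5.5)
Your argument is correct. Passing to $h=f-g$, taking the norm $\prod_j h|_k\gamma_j$ over $\{\pm1\}\Gamma\backslash\mathrm{SL}_2(\mathbb{Z})$ (raised to a power to remove signs or characters), and comparing with the level-one valence formula is the standard proof of this bound. The paper gives no proof of its own, only the citation to \cite{MDG15}, so there is no different route to compare against.

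Your closing paragraph contains a false claim, though it is not needed for the lemma. The two weight-$0$ eta-quotients in the Remark are \emph{not} holomorphic at every cusp of $\Gamma_0(48)$. By the lemma itself with $k=0$, any weight-$0$ form holomorphic at all cusps is constant. The first quotient, however, is $\frac{f_2^3f_3^2f_{12}}{f_1^2f_4f_6^3}=1+2q+2q^2+\cdots$, and Garvan's output reports \texttt{mintotord} $=-2$, i.e.\ there are poles at cusps other than $\infty$.

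So the lemma cannot be applied with $k=0$ in that setting, and the ``trivial bound'' conclusion does not hold. You must first clear the poles, for instance by multiplying by a holomorphic form with enough vanishing at those cusps. That raises the weight, and with it the number of coefficients to check. The alternative is the modular-function valence argument that \texttt{provemodfuncGAMMA0id} carries out.
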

\end{remark}

     Cui, Gu, and Tang \cite[Theorem 3.13]{CGT25} found that
    \begin{align}
        \sum_{n=0}^\infty c\phi_{18}(n)q^n&=\dfrac{1}{f_1^{18}}\bigg (a^6(q)a(q^6) \dfrac{f_6^{15}}{f_3^6f_{12}^6}+270qa^4(q)a(q^6) \dfrac{f_2^2f_3^3f_6^9}{f_1^3f_4f_{12}^3} +7290q^2a^2(q)\dfrac{f_3^6f_6^{18}}{f_1^4f_2f_{12}^6}\nonumber \\&\quad +1080q^2a^3(q)\dfrac{f_3^8f_4f_6^6}{f_1^2f_2f_{12}} +1080q^2a^3(q)\dfrac{f_3^8f_4f_6^6f_9^2}{f_1^4f_{12}f_{18}} +1080q^2a^3(q)\dfrac{f_2f_3^6f_6^{12}}{f_1^4f_4f_{12}^3}\nonumber \\&\quad +14580q^3\dfrac{f_2f_3^{15}f_6^{12}}{f_1^7f_4f_{12}^3} +58320q^3a(q)\dfrac{f_3^{14}f_4^3f_6^2f_9^2f_{12}}{f_1^6f_{18}}\nonumber \\&\quad +29160q^3a^2(q)\dfrac{f_2^3f_3^{12}f_6^2f_{12}^2}{f_1^6} +2160q^3a^3(q)\dfrac{f_2^3f_3^{15}f_{12}^2f_{18}^2}{f_1^6f_6^4f_9}\nonumber \\&\quad +2160q^3a^4(q)\dfrac{f_3^6f_4^3f_6f_{12}f_{18}^2}{f_1^3f_9} +24q^3a^6(q)\dfrac{f_6^5f_{12}^2f_{18}^2}{f_3^3f_9}+29160q^4\dfrac{f_2^2f_3^{20}f_{12}^5}{f_1^6f_4f_6^3}\nonumber \\&\quad +13122q^4\dfrac{f_3^{16}f_9^4f_{18}^5}{f_1^6f_{36}^2} +58320q^4\dfrac{f_3^{18}f_4f_6^5f_{18}^2}{f_1^7f_9f_{12}} +29160q^4\dfrac{f_2^3f_3^{20}f_9^2f_{12}^5}{f_1^8f_4f_6^3f_{18}}\nonumber \\&\quad +43740q^4a(q)\dfrac{f_2^2f_3^{12}f_6^8f_{18}^3}{f_1^6f_4f_{12}^3} +8748q^4a(q^3)\dfrac{f_3^{16}f_6^2f_9^4f_{36}}{f_1^6f_{12}f_{18}}\bigg )\label{c 18 Gen}.
    \end{align}
    Working modulo 8 on both sides of the above, we have
    \begin{align*}
         \sum_{n=0}^\infty c\phi_{18}(n)q^n&\equiv a(q)q(q^6)\frac{f_6^{15}}{f_1^{18} f_3^6 f_{12}^6}+2\times 5\times 3^3a(q^6)q \frac{f_2^2 f_3^3 f_6^9}{f_1^{21} f_4 f_{12}^3}+2\times 5\times 3^6 q^2\frac{f_3^6 f_6^{18}}{f_1^{22} f_2 f_{12}^6}\\ &\quad +5\times 2^2\times 3^7q^4\frac{f_2^2 f_3^{12} f_6^8 f_{18}^3}{f_1^{24} f_4 f_{12}^3}+5\times 2^2\times 3^6q^3\frac{f_2 f_3^{15} f_6^{12}}{f_1^{25} f_4 f_{12}^3}+2^2 \times 3^7 q^4\frac{f_3^{16} f_6^2 f_9^4 f_{36}}{f_1^{24} f_{12} f_{18}}\\&\quad+2\times 3^8q^4\frac{f_3^{16} f_9^4 f_{18}^5}{f_1^{24} f_{36}^2}\pmod8.
    \end{align*}
    Applying \eqref{eq:Borwein} in the above identity, we obtain
    \begin{align*}
       \sum_{n=0}^\infty c\phi_{18}(n)q^n&\equiv a^6(q^4) a(q^6)\frac{f_6^{15}}{f_1^{18} f_3^6 f_{12}^6}+4 a^5(q^4) a(q^6) q \frac{f_4^2 f_6^{14}}{f_1^{18} f_2 f_3^6 f_{12}^4}-2 a(q^6) q\frac{f_2^2 f_3^3 f_6^9}{f_1^{21} f_4 f_{12}^3}\\&\quad+4 a^4(q^4) a(q^6) q^2\frac{f_4^4 f_6^{13}}{f_1^{18} f_2^2 f_3^6 f_{12}^2}+2 q^2\frac{f_3^6 f_6^{18}}{f_1^{22} f_2 f_{12}^6}+4q^3\frac{f_2 f_3^{15} f_6^{12}}{f_1^{25} f_4 f_{12}^3}\\&\quad+4q^4\frac{f_2^2 f_3^{12} f_6^8 f_{18}^3}{f_1^{24} f_4 f_{12}^3}+2q^4\frac{f_3^{16} f_9^4 f_{18}^5}{f_1^{24} f_{36}^2}+4q^4\frac{f_3^{16} f_9^4 f_6^2 f_{36}}{f_1^{24} f_{12} f_{18}}\\
       &\equiv a^6(q^4) a(q^6)\frac{f_6^{15}}{f_1^{18} f_3^6 f_{12}^6}+4 a^5(q^4) a(q^6) q \frac{f_4^2 f_6^{14}}{f_1^{18} f_2 f_3^6 f_{12}^4}-2 a(q^6) q\frac{f_2^2 f_3^3 f_6^9}{f_1^{21} f_4 f_{12}^3}\\&\quad+4 a^4(q^4) a(q^6) q^2\frac{f_4^4 f_6^{13}}{f_1^{18} f_2^2 f_3^6 f_{12}^2}+2 q^2\frac{f_3^6 f_6^{18}}{f_1^{22} f_2 f_{12}^6}+4q^3\frac{f_2 f_3^{15} f_6^{12}}{f_1^{25} f_4 f_{12}^3}\\&\quad+2q^4\frac{f_3^{16} f_9^4 f_{18}^5}{f_1^{24} f_{36}}\pmod8.
    \end{align*}
    Using \eqref{dis1byf1^2}, \eqref{dis1byf1^4}, \eqref{2dis1byf1f3}, and \eqref{2disf3byf1} in the above identity, extracting the odd powered terms of $q$, and then simplifying, we find that
    \begin{align*}
         \sum_{n=0}^\infty c\phi_{18}(2n+1)q^n&\equiv 2a(q^3)\frac{f_4^2 f_6^2 f_3^7}{f_1^{10} f_2^2 f_{12}^2}+2 a^6(q^2) a(q^3) \frac{f_2^2 f_6^8 f_8^2 f_{12}}{f_1^{13} f_3^4 f_4 f_{24}^2}+4 a^5(q^2) a(q^3) \frac{f_2^2 f_3^{11}}{f_1^{10} f_6^4}\\&\quad-2a^6(q^2) a(q^3) q\frac{f_4^5 f_6^{10} f_{24}^2}{f_1^{13} f_3^4 f_8^2 f_{12}^5}+4 a^6(q^2) a(q^3)q\frac{f_4^5 f_{12}^9}{f_1^{13} f_6^4 f_8^2 f_{24}^2}\\&\quad+4q\frac{f_2^2 f_8^2 f_3^{21} f_{12}^5}{f_1^{16} f_4 f_6^8 f_{24}^2}+4q\frac{f_8  f_3^{20} f_{12}^2}{f_1^{13} f_4 f_6^4 f_{24}}-4q^2\frac{f_4^5 f_3^{21} f_{24}^2}{f_1^{16} f_6^6 f_8^2 f_{12}}\pmod8 .
    \end{align*}
    Invoking \eqref{a(q) Id}, \eqref{3d f1^3}, \eqref{3d 1/f1^3}, \eqref{3d f1^2/f2}, and \eqref{3d f2/f1^2} in the above, extracting the terms involving $q^{3n+2}$, replacing $q^3$ by $q$ and then simplifying, we have
      \begin{align}
         \sum_{n=0}^\infty c\phi_{18}(6n+5)q^n&\equiv2 a^6(q) a^6(q^2) \frac{f_4 f_3^{16}}{f_1^{23} f_{12}}+2 a^3(q) \frac{f_3^{12}}{f_1^{19}}-4 a^8(q) \frac{f_4^3 f_3^{12}}{f_1^{31}}+4 a^6(q) a^6(q^2) \frac{f_3^{12}}{f_1^{19}}\nonumber\\&\quad+4 a^5(q) \frac{f_3^{12}}{f_1^{19}}+4 a^3(q) a^5(q^2) \frac{f_3^{12}}{f_1^{19}}+4 a^2(q) a^6(q^2)\frac{f_6^3}{f_1}+4 a^3(q) a^6(q^2) q\frac{f_3^{21}}{f_1^{22}}\nonumber\\&\quad+4 a^2(q) a^6(q^2) q^2\frac{f_3^{30}}{f_1^{25}}\pmod8 .\label{eq:6n+5}
    \end{align}
        From \eqref{id:a(q)}, it can be easily seen that
    \begin{align*}
        a(q)&\equiv1 \pmod2,  &
        a(q)&\equiv\dfrac{f_1^6f_6}{f_2^3f_3^2}\pmod4, & 
        a^2(q)&\equiv1\pmod4.
    \end{align*}
    Invoking the above in \eqref{eq:6n+5}, we obtain
    \begin{align*}
         \sum_{n=0}^\infty c\phi_{18}(6n+5)q^n
         &\equiv 2 \frac{f_4 f_3^{16}}{f_1^{23} f_{12}}+2 \frac{f_2f_3^{10}f_6}{f_1^{21}}+4\frac{f_6^3}{f_1}+4q \frac{f_3^{21}}{f_1^{22}}\left(1+q\frac{f_3^9}{f_1^3}\right)\pmod8\\
         &\equiv 2 \frac{f_4 f_3^{16}}{f_1^{23} f_{12}}+2 \frac{f_2f_3^{10}f_6}{f_1^{21}}+4\frac{f_6^3}{f_1}+4q\frac{f_1^3f_3^{21}}{f_1^{25}}\left(\frac{f_1^9}{f_3^3}\right) \quad\text{(Thanks to \cite[(21.3.2)]{Hir17})}\\
         &\equiv 2 \frac{f_4 f_3^{16}}{f_1^{23} f_{12}}+2 \frac{f_2f_3^{10}f_6}{f_1^{21}}+4\frac{f_3^6}{f_1}\left(1+q\frac{f_3^{12}}{f_1^{12}}\right)\\
         &\equiv 2 \frac{f_4 f_3^{16}}{f_1^{23} f_{12}}+2 \frac{f_2f_3^{10}f_6}{f_1^{21}}+4\frac{f_3^6}{f_1}\left(\frac{f_3^3}{f_1^9}\right) \quad\text{(Thanks to \cite[Sect. 22.10]{Hir17})}\\
         &\equiv 2 \frac{f_4 f_3^{16}}{f_1^{23} f_{12}}+2 \frac{f_2f_3^{10}f_6}{f_1^{21}}+4\frac{f_3^9}{f_1^{10}}\\
         &\equiv 2 \frac{f_3^{12}}{f_1^{19}}\left(\frac{f_4 f_{12}^3}{f_2^2 f_3^{12}}+\frac{f_1^2 f_6}{f_2 f_3^2}\right)+4\frac{f_3^9}{f_1^{10}}\\
         &\equiv 2 \frac{f_3^{12} }{f_1^{19}}\left(\frac{f_1^2 f_6 }{f_2 f_3^2}+\frac{f_2^2 f_{12}}{f_4 f_6^2}\right)+4\frac{f_3^9}{f_1^{10}}\\
         &\equiv 2 \frac{f_3^{12}}{f_1^{19}}\left(2\frac{f_1 f_{16} f_{24}^2}{f_3 f_6 f_8 f_{48}}\right)+4\frac{f_3^9}{f_1^{10}}\quad\text{(Thanks to Lemma \ref{Lemma for mod 8})}\\
         &\equiv 4 \frac{f_3^{11} f_{16} f_{24}^2}{f_1^{18} f_6 f_8 f_{48}}+4\frac{f_3^9}{f_1^{10}}\\
         &\equiv 4\frac{f_3^9}{f_1^{10}}+4\frac{f_3^9}{f_1^{10}}\\
         &\equiv 0\pmod8,
    \end{align*}
    which is \eqref{cong: mod 8}. This completes the proof. \qed
    \section{Proof of Theorem \ref{thm 18 3 power}}
    \begin{proof}[Proof of \eqref{c 18 81}]
    From \eqref{a(q) Id}, we recall that $a(q)\equiv 1 \pmod{3}$. Now, working modulo 81, \eqref{c 18 Gen} becomes
    \begin{align*}
     \sum_{n=0}^\infty c\phi_{18}(n)q^n&\equiv \frac{1}{f_1^{18}}\Bigg( a^6(q) a(q^6)\frac{f_6^{15}}{f_3^6 f_{12}^6}+2\times 3^3 \times5 q \frac{f_2^2 f_3^3 f_6^9}{f_1^3 f_4 f_{12}^3}+2^3\times 3^3\times 5 q^2\frac{f_3^8 f_4 f_6^6}{f_1^2 f_2 f_{12}}\\
     &\quad +2^3\times 3^3\times 5 q^2\frac{f_3^8 f_4 f_6^6 f_9^2}{f_1^4 f_{12} f_{18}}+2^3\times 3^3\times 5 q^2\frac{f_2 f_3^6 f_6^{12}}{f_1^4 f_4 f_{12}^3}+2^4\times 3^3\times 5 q^3\frac{f_2^3 f_3^{15} f_{12}^2 f_{18}^2}{f_1^6 f_6^4 f_9}\\
     &\quad +2^4\times 3^3\times 5 q^3\frac{f_3^6 f_4^3 f_6 f_{12} f_{18}^2}{f_1^3 f_9}+2^3 \times3 q^3\frac{f_1^{18} f_6^5 f_{12}^2 f_{18}^2}{f_3^9 f_9}\Bigg) \pmod{81}.
    \end{align*}
    Now, we employ Lemma \ref{Lemma 3d} to 3-dissect the above identity in order to obtain
    \begin{align*}
        \sum_{n=0}^\infty c\phi_{18}(3n+1)q^n&\equiv 2\times 3^3 \frac{f_2^{15} f_3^{21}}{f_1^{67} f_4^6}+2 \times3^3\times 5\frac{f_3^3 f_2^9 f_3^{18} f_2^6}{f_1^{67} f_4^3 f_4^3}+2^4\times 3^3\times 5 q\frac{f_2^7 f_3^{20} f_{12}}{f_1^{56} f_4}\\
        &\quad+2^3\times 3^3\times 5 q\frac{f_2^{16} f_3^{21} f_{12}^3}{f_1^{59} f_4^7 f_6^3}\\
        &\equiv 324 \frac{f_2^{15} f_3^{21}}{f_1^{67} f_4^6}+ 3240 q\frac{f_2^7 f_3^{20} f_{12}}{f_1^{56} f_4}\equiv 0 \pmod{81},
    \end{align*}which proves \eqref{c 18 81}.
    \end{proof}

\begin{proof}[Proof of \eqref{c 18 2187}]
     We now turn to the proof of \eqref{c 18 2187}, the conjecture proposed by Cui, Gu, and Tang \cite{CGT25}. We first reduce the generating function modulo 2187 and then complete the proof using the $(p,k)$-parametrization.
    Our approach requires the following theta function identities.
    We recall the following $(p,k)$-parametrizations from \cite{AAW06}, which will be used throughout the proof:
    \begin{align*}
        f_1&=2^{-1/6} q^{-1/24} p^{1/24} (1 - p)^{1/2} (1+p)^{1/6} (1+2p )^{1/8} (2+p)^{1/8} k^{1/2},\\
        f_2&=2^{-1/3} \, q^{-1/12} \, p^{1/12} (1 - p)^{1/4} (1+p)^{1/12} (1+2p)^{1/4} (2+p)^{1/4} k^{1/2},\\
        f_3&=2^{-1/6} \, q^{-1/8} \, p^{1/8} (1 - p)^{1/6} (1+p)^{1/2} (1 + 2p)^{1/24} (2 + p)^{1/24} k^{1/2},\\
        f_4&=2^{-2/3} \, q^{-1/6} \, p^{1/6} (1 - p)^{1/8} (1+p)^{1/24} (1+2p )^{1/8} (2+p)^{1/2} k^{1/2},\\
        f_6&=2^{-1/3} \, q^{-1/4} \, p^{1/4} (1 - p)^{1/12} (1+p)^{1/4} (1+2p )^{1/12} (2+p)^{1/12} k^{1/2},\\
        f_{12}&=2^{-2/3} \, q^{-1/2} \, p^{1/2} (1 - p)^{1/24} (1+p)^{1/8} (1+2p)^{1/24} (2+p)^{1/6} k^{1/2},
    \end{align*}
    where
    \begin{align*}
        p:=p(q)=\frac{\Theta_3^2(q)-\Theta_3^2(q^3)}{2\Theta_3^2(q^3)}, \qquad
        k:=k(q)=\frac{\Theta_3^3(q^3)}{\Theta_3(q)}.
    \end{align*}
    \begin{lemma}
        We have
        \begin{align}
            f_1^4f_6^2+f_2^2 f_3^4&=2\frac{f_1^2 f_4 f_6^9}{f_2 f_3^2 f_{12}^3},\label{Id 1}\\
            f_2^2 f_3^4- f_1^4 f_6^2&= 4q\dfrac{f_1f_2^2f_3f_{12}^3}{f_4},\label{Id 4}\\
            f_2^9f_3 f_{12}^2+2 f_1^3 f_4^6 f_6^3&=3 f_1^2 f_2^2 f_3^3 f_4^3 f_6 f_{12},\label{Id 2}\\
            f_2^9 f_3 f_{12}^2-f_1^3 f_4^6 f_6^3&=3q f_1^3 f_2^2 f_4^2 f_6 f_{12}^4, \label{Id 5}\\
            f_2^2f_3^3f_{12}-f_1f_4^3f_6^2&=q \dfrac{f_1f_2^2f_{12}^4}{f_4}, \label{Id 6}\\
            2 f_1^2 f_3^2 f_4^3 f_6-f_2^7 f_{12}&=\frac{f_1^4 f_4^4 f_6^{10}}{f_2^3 f_3^4 f_{12}^3}, \label{Id 8}\\
            4 f_3 f_4^3-f_1^3 f_{12}&=3\frac{f_2^2 f_3^4 f_{12}}{f_1 f_6^2}. \label{Id 9}
        \end{align}
    \end{lemma}
    \begin{proof}
        Using these parametrizations,
    In view of the above parametrization, we have
    \begin{align*}
        f_1^4f_6^2+f_2^2 f_3^4&=2^{-4/3} \, q^{-2/3} \, p^{2/3} (1 - p)^{13/6} (1+p)^{7/6} (1+2p)^{2/3} (2+p)^{2/3} k^3\\
        &\quad+ 2^{-4/3} \, q^{-2/3} \, p^{2/3} (1 - p)^{7/6} (1+p)^{13/6} (1+2p)^{2/3} (2+p)^{2/3} k^3.
    \end{align*}
    On simplification, we have
    \begin{align*}
        f_1^4f_6^2+f_2^2 f_3^4&=2^{-1/3} \, q^{-2/3} \, p^{2/3} (1 - p^2)^{7/6} (1+2p)^{2/3} (2+p)^{2/3} k^3=2 \frac{f_1^2 f_4 f_6^9}{f_2 f_3^2 f_{12}^3},
    \end{align*}
    which is \eqref{Id 1}. The remaining identities can similarly be proved.
    \end{proof}
    
    Now, first of all, we replace all the $a(q)$ in \eqref{c 18 Gen}  suitably by
    \begin{align*}
        a(q)&=\frac{f_1^3}{f_3}+9q \frac{f_9^3}{f_3}, &
        a^3(q)&=\frac{f_1^9}{f_3^3}+27 q \frac{f_3^9}{f_1^3}, &
       a(q^3) &=\frac{f_1^3}{f_3}+3q \frac{f_9^3}{f_3}.
    \end{align*}
    Then on the resulting identity, we employ Lemma \ref{Lemma 3d} under modulo 2187 to extract
        \begin{align*}
        &\sum_{n=0}^\infty c\phi_{18}(3n+2)q^n\\&\equiv 1215\frac{f_2^{12} f_3^7 f_6}{f_1^{17} f_4^3 f_{12}}+2106\frac{f_2^{18} f_3^8}{f_1^{20} f_4^6 f_6}+1080 \frac{f_2^{12} f_3 f_6^9}{f_1^{15} f_4^5 f_{12}^3}+1080 \frac{f_2^4 f_3^2 f_6^3}{f_1^{14} f_{12}^2}\left( f_1^4f_6^2+f_2^2 f_3^4\right)\\
        &\quad +729 q\frac{f_2^4 f_3^{14} f_6^3}{f_1^{26} f_{12}^2}\left( f_1^4f_6^2+f_2^2 f_3^4\right)+1215 q\frac{f_3^{11} f_6^6}{f_1^{17} f_2^5 f_{12}^2}\left( f_1^4f_6^2+f_2^2 f_3^4\right)+ 972 q\frac{f_2^6 f_3^{13} f_{12}}{f_1^{23} f_4 f_6^3}\\
        &\quad \times \left( f_2^2 f_3^4-f_1^4f_6^2\right)+729 q \frac{f_3^{12} f_4^2 f_6}{f_1^{16}}+243 q \frac{f_2^5 f_3^7 f_4^2 f_6^2}{f_1^{17}}-729q \frac{f_2 f_3^9 f_4^2 f_6^2}{f_1^{15}}+972q \frac{f_2^{14} f_3^{12} f_6^3}{f_1^{24} f_4^6}\\
        &\quad -2106q \frac{f_3^4 f_4^2 f_6^5}{f_1^8 f_2^4}+1458q \frac{f_3^{13} f_6^6}{f_1^{11} f_2^3 f_4^6}+729 q \frac{f_2^{12} f_3^{13} f_6^9}{f_1^{27} f_4^5 f_{12}^3}+1215 q \frac{f_2^{12} f_3^{10} f_6^9}{f_1^{18} f_4^{14}}+405 q \frac{f_2^7 f_3^{10} f_{12}}{f_1^{18} f_4}\\
        &\quad +2025 q\frac{f_2^{16} f_3^{11} f_{12}^3}{f_1^{21} f_4^7 f_6^3}-1458 q^2\frac{f_3^{13} f_6^5 f_{12}^3}{f_1^{11} f_2^4 f_4^7}+1458 q^2\frac{f_3^8 f_6^{13}}{f_1^{20} f_{12}^2}-729 q^2\frac{f_2 f_3^6 f_4 f_6^2 f_{12}^3}{f_1^{14}}\\
        &\quad +729 q^2\frac{f_3^{19} f_6^3}{f_1^{24} f_2^2 f_4}\left(f_3 f_4^3+2 f_1^3 f_{12}\right) \pmod{2187}.
    \end{align*}

    In pursuit of $\displaystyle{\sum_{n=0}^\infty c\phi_{18}(3n+2)q^n}\equiv 0 \pmod{2187}$, we attempt to reduce the above generating function modulo 2187 in several steps as described below. Due to \eqref{Id 1}, \eqref{Id 4}, and the fact that $f_3 f_4^3\equiv f_1^3 f_{12}\pmod{3}$, the above generating function reduces to
    \begin{align}
        &\sum_{n=0}^\infty c\phi_{18}(3n+2)q^n\notag\\
        &\equiv 3^3\times 2^3\times 5 \frac{f_2^3 f_6^9}{f_1^{15} f_4^5 f_{12}^5} \left(f_2^9 f_3 f_{12}^2+2 f_1^3 f_4^6 f_6^3\right)-3^4 \frac{f_2^{18} f_3^8}{f_1^{20} f_4^6 f_6}+3^4 q\frac{f_3^4 f_4^2 f_6^5}{f_1^8 f_2^4}+3^4\times 5 q\frac{f_2^7 f_3^{10} f_{12}}{f_1^{18} f_4}\notag\\
        &\quad +3^4\times 5^2 q\frac{f_2^{16} f_3^{11} f_{12}^3}{f_1^{21} f_4^7 f_6^3}-2^2\times 3^5\frac{f_2^{12} f_3^7 f_6}{f_1^{17} f_4^3 f_{12}}+2^2\times 3^5 q\frac{f_2^{14} f_3^{12} f_6^3}{f_1^{24} f_4^6}-2^2\times 3^5 q\frac{f_2^{12} f_3^{10} f_6^9}{f_1^{18} f_4^{14}}\notag\\
        &\quad +3^5 q\frac{f_2^5 f_3^7 f_4^2 f_6^2}{f_1^{17}}+3^5 q\frac{f_3^9 f_4 f_6^{15}}{f_1^{15} f_2^6 f_{12}^5}+3^5\times 7 q^2\frac{f_2^8 f_3^{14} f_{12}^4}{f_1^{22} f_4^2 f_6^3}+3^6 q \frac{f_3^9 f_4^2 f_6}{f_1^{16}}\left(f_3^3-f_1 f_2 f_6\right)\notag\\
        &\quad +3^6q \frac{f_2^3 f_3^{12} f_6^9}{f_1^{27} f_4^5 f_{12}^5}\left(f_2^9 f_3 f_{12}^2-f_1^3 f_4^6 f_6^3\right)-3^6 q\frac{f_3^{13} f_6^6}{f_1^{11} f_2^3 f_4^6}-3^6 q^2\frac{f_2 f_3^6 f_4 f_6^2 f_{12}^3}{f_1^{14}}-3^6 q^2\frac{f_3^8 f_6^{13}}{f_1^{20} f_{12}^2}\notag\\
        &\quad +3^6 q^2 \frac{f_3^{13} f_6^5 f_{12}^3}{f_1^{11} f_2^4 f_4^7} \pmod{2187}.\label{Gen 3n+2 Mod 2187 Eve1}
    \end{align}
    Thanks to \eqref{Id 2}, \eqref{Id 5} and \cite{Hir17}
    \begin{align*}
        \frac{f_3^3}{f_1}&=\frac{f_4^3 f_6^2}{f_2^2 f_{12}}+q \frac{f_{12}^3}{f_4} \quad \Rightarrow \quad f_3^3-f_1 f_2 f_6\equiv q \frac{f_1f_{12}^3}{f_4} \pmod{3},
    \end{align*}
    for which, \eqref{Gen 3n+2 Mod 2187 Eve1} reduces to
    \begin{align}
        &\sum_{n=0}^\infty c\phi_{18}(3n+2)q^n\notag\\
        &\equiv 3^4\times 13\frac{f_2^5 f_3^3 f_6^{10}}{f_1^{13} f_4^2 f_{12}^4}-3^4 \frac{f_2^{18} f_3^8}{f_1^{20} f_4^6 f_6}+3^4 q \frac{f_3^4 f_4^2 f_6^5}{f_1^8 f_2^4}+3^4\times 5 q \frac{f_2^7 f_3^{10} f_{12}}{f_1^{18} f_4}+3^4\times  5^2 q \frac{f_2^{16} f_3^{11} f_{12}^3}{f_1^{21} f_4^7 f_6^3}\notag\\
        &\quad -2^2\times 3^5\frac{f_2^{12} f_3^7 f_6}{f_1^{17} f_4^3 f_{12}}+2^2\times 3^5 q\frac{f_2^{14} f_3^{12} f_6^3}{f_1^{24} f_4^6}-2^2\times 3^5 q\frac{f_2^{12} f_3^{10} f_6^9}{f_1^{18} f_4^{14}}+3^5 q\frac{f_2^5 f_3^7 f_4^2 f_6^2}{f_1^{17}}\notag\\
        &\quad +3^5 q\frac{f_3^9 f_4 f_6^{15}}{f_1^{15} f_2^6 f_{12}^5}+3^5\times 7 q^2\frac{f_2^8 f_3^{14} f_{12}^4}{f_1^{22} f_4^2 f_6^3}+3^6 q^2\frac{f_3^6 f_4 f_6 f_{12}^3}{f_1^{15}}(f_3^3-f_1 f_2 f_6)-3^6 q\frac{f_3^{13} f_6^6}{f_1^{11} f_2^3 f_4^6}\notag\\
        &\quad -3^6 q^2\frac{f_3^8 f_6^{13}}{f_1^{20} f_{12}^2}+3^6 q^2\frac{f_3^{13} f_6^5 f_{12}^3}{f_1^{11} f_2^4 f_4^7}\notag\\
        &\equiv 3^4\times 13\frac{f_2^5 f_3^3 f_6^{10}}{f_1^{13} f_4^2 f_{12}^4}-3^4 \frac{f_2^{18} f_3^8}{f_1^{20} f_4^6 f_6}+3^4 q \frac{f_3^4 f_4^2 f_6^5}{f_1^8 f_2^4}+ 3^4\times 5 q \frac{f_2^7 f_3^{10} f_{12}}{f_1^{21} f_4^7 f_6^3}\left(f_1^3 f_4^6 f_6^3+5f_2^9 f_3 f_{12}^2\right)\notag\\
        &\quad -2^2\times 3^5\frac{f_2^{12} f_3^7 f_6}{f_1^{17} f_4^3 f_{12}}+2^2\times 3^5 q\frac{f_2^{14} f_3^{12} f_6^3}{f_1^{24} f_4^6}-2^2\times 3^5 q\frac{f_2^{12} f_3^{10} f_6^9}{f_1^{18} f_4^{14}}+3^5 q\frac{f_2^5 f_3^7 f_4^2 f_6^2}{f_1^{17}}\notag\\
        &\quad +3^5 q\frac{f_3^9 f_4 f_6^{15}}{f_1^{15} f_2^6 f_{12}^5}+3^5\times 7 q^2\frac{f_2^8 f_3^{14} f_{12}^4}{f_1^{22} f_4^2 f_6^3}-3^6 q\frac{f_3^{13} f_6^5}{f_1^{11} f_2^4 f_4^7}(f_2 f_4 f_6-q f_{12}^3)-3^6 q^2\frac{f_3^8 f_6^{13}}{f_1^{20} f_{12}^2}\notag\\
        &\quad +3^6 q^3 \frac{f_3^6 f_6 f_{12}^6}{f_1^{14}} \pmod{2187}. \label{Gen 3n+2 Mod 2187 Eve2}
    \end{align}

Now using \eqref{Id 2},
\begin{align*}
    5\left(f_1^3 f_4^6 f_6^3+5f_2^9 f_3 f_{12}^2\right)&\equiv 2 (2(2f_1^3 f_4^6 f_6^3+f_2^9 f_3 f_{12}^2+6 f_1^3 f_4^6 f_6^3)-3f_2^9 f_3 f_{12}^2)\\
    &\equiv 2 \times 3(2f_1^2 f_2^2 f_3^3 f_4^3 f_6 f_{12}+4 f_1^3 f_4^6 f_6^3-f_2^9 f_3 f_{12}^2) \pmod{27}.
\end{align*}
From \cite{Hir17}
    \begin{align*}
        \frac{f_1}{f_3^3}&=\frac{f_2 f_4^2 f_{12}^2}{f_6^7}-q \frac{f_2^3 f_{12}^6}{f_4^2 f_6^9} \quad \Rightarrow \quad f_2 f_4 f_6-q f_{12}^3\equiv \dfrac{f_1f_4^2f_6^9}{f_2^3f_3^3f_{12}^
        3} \pmod{3}.
    \end{align*}
Applying the above two in \eqref{Gen 3n+2 Mod 2187 Eve2}, we get
    \begin{align}
        &\sum_{n=0}^\infty c\phi_{18}(3n+2)q^n\notag\\
        &\equiv 3^4\times 13\frac{f_2^5 f_3^3 f_6^{10}}{f_1^{13} f_4^2 f_{12}^4}-3^4 \frac{f_2^{18} f_3^8}{f_1^{20} f_4^6 f_6}+3^4 q \frac{f_3^4 f_4^2 f_6^5}{f_1^8 f_2^4}+3^5 q \frac{f_2^5 f_3^7}{f_1^{18} f_4}(f_1 f_4^3 f_6^2-f_2^2 f_3^3 f_{12})\notag\\
        &\quad -2^2\times  3^5\frac{f_2^{12} f_3^7 f_6}{f_1^{17} f_4^3 f_{12}}+ 2 \times 3^5 q \frac{f_2^9 f_3^{11} f_{12}^2}{f_1^{21} f_4^7 f_6^3}(2 f_1^2 f_3^2 f_4^3 f_6-f_2^7 f_{12})+2^2\times 3^5 q \frac{f_2^{14} f_3^{12} f_6^3}{f_1^{24} f_4^6}\notag\\
        &\quad+3^5 q\frac{f_3^9 f_6^9}{f_1^{18} f_2^6 f_4^{14} f_{12}^5}(f_1^3 f_4^{15} f_6^6-4 f_2^{18} f_3 f_{12}^5)+3^5 \times 7 q^2 \frac{f_2^8 f_3^{14} f_{12}^4}{f_1^{22} f_4^2 f_6^3}-3^6 q  \frac{f_3^{10} f_6^{14}}{f_1^{10} f_2^7 f_4^5 f_{12}^3}\notag\\
        &\quad -3^6 q^2\frac{f_3^8 f_6^{13}}{f_1^{20} f_{12}^2}+3^6 q^3 \frac{f_3^6 f_6 f_{12}^6}{f_1^{14}} \pmod{2187}. \label{Gen 3n+2 Mod 2187 Eve3}
    \end{align}
    Here, 
    \begin{align*}
        f_1^3 f_4^{15} f_6^6-4 f_2^{18} f_3 f_{12}^5&\equiv \frac{f_6^6 f_{12}^5}{f_4^3}(f_1^3 f_{12}-4 f_3 f_4^3)\pmod{9}.
    \end{align*}
    Using the above identity, \eqref{Id 6}, \eqref{Id 8}, and \eqref{Id 9} in \eqref{Gen 3n+2 Mod 2187 Eve3} and the fact that $f_2^6f_{12}^2 \equiv f_4^6 f_6^2 \pmod{3}$, we derive that
    \begin{align}
        &\sum_{n=0}^\infty c\phi_{18}(3n+2)q^n\notag\\
        &\equiv 3^4\times 13\frac{f_2^5 f_3^3 f_6^{10}}{f_1^{13} f_4^2 f_{12}^4}-3^4 \frac{f_2^{18} f_3^8}{f_1^{20} f_4^6 f_6}+3^4 q \frac{f_3^4 f_4^2 f_6^5}{f_1^8 f_2^4}-3^5 q^2\frac{f_2^7 f_3^7 f_{12}^4}{f_1^{17} f_4^2}-2^2\times 3^5 \frac{f_2^{12} f_3^7 f_6}{f_1^{17} f_4^3 f_{12}}\notag\\
        &\quad+2\times 3^5 q\frac{f_2^6 f_3^7 f_6^7}{f_1^{17} f_4^3 f_{12}}+2^2 \times 3^5 q\frac{f_2^{14} f_3^{12} f_6^3}{f_1^{24} f_4^6}+3^5\times  7 q^2\frac{f_2^8 f_3^{14} f_{12}^4}{f_1^{22} f_4^2 f_6^3}-3^6q \frac{f_3^{10} f_6^{12}}{f_1^{10} f_2^7 f_4^{11} f_{12}^3}\notag\\
        &\quad\times (f_2^6f_{12}^2 +f_4^6 f_6^2)-3^6 q^2\frac{f_3^8 f_6^{13}}{f_1^{20} f_{12}^2}+3^6 q^3 \frac{f_3^6 f_6 f_{12}^6}{f_1^{14}} \notag\\
        &\equiv 3^4\times 13\frac{f_2^5 f_3^3 f_6^{10}}{f_1^{13} f_4^2 f_{12}^4}-3^4 \frac{f_2^{18} f_3^8}{f_1^{20} f_4^6 f_6}+3^4 q \frac{f_3^4 f_4^2 f_6^5}{f_1^8 f_2^4}-3^5 q^2\frac{f_2^7 f_3^7 f_{12}^4}{f_1^{17} f_4^2}-2^2\times 3^5 \frac{f_2^{12} f_3^7 f_6}{f_1^{17} f_4^3 f_{12}}\notag\\
        &\quad+2\times 3^5 q\frac{f_2^6 f_3^7 f_6^7}{f_1^{17} f_4^3 f_{12}}+2^2 \times 3^5 q\frac{f_2^{14} f_3^{12} f_6^3}{f_1^{24} f_4^6}+3^5\times  7 q^2\frac{f_2^8 f_3^{14} f_{12}^4}{f_1^{22} f_4^2 f_6^3}+3^6q \frac{f_3^{10} f_6^{12}}{f_1^{10} f_2 f_4^{11} f_{12}}\notag\\
        &\quad-3^6 q^2\frac{f_3^8 f_6^{13}}{f_1^{20} f_{12}^2}+3^6 q^3 \frac{f_3^6 f_6 f_{12}^6}{f_1^{14}} \pmod{2187}.\label{Gen 3n+2 Mod 2187 Eve4}
    \end{align}
To complete the proof modulo 2187, we set
\begin{equation*}
F(q):=\sum_{n=0}^{\infty}c\phi_{18}(3n+2)q^n.
\end{equation*}
Then \eqref{Gen 3n+2 Mod 2187 Eve4} gives
\begin{align*}
F(q)\equiv {}&
3^4\cdot13
\frac{f_2^5f_3^3f_6^{10}}
     {f_1^{13}f_4^2f_{12}^4}
-
3^4
\frac{f_2^{18}f_3^8}
     {f_1^{20}f_4^6f_6}
+
3^4q
\frac{f_3^4f_4^2f_6^5}
     {f_1^8f_2^4}
-
3^5q^2
\frac{f_2^7f_3^7f_{12}^4}
     {f_1^{17}f_4^2}
\\
&-
2^2\cdot3^5
\frac{f_2^{12}f_3^7f_6}
     {f_1^{17}f_4^3f_{12}}
+
2\cdot3^5q
\frac{f_2^6f_3^7f_6^7}
     {f_1^{17}f_4^3f_{12}}
+
2^2\cdot3^5q
\frac{f_2^{14}f_3^{12}f_6^3}
     {f_1^{24}f_4^6}
+
3^5\cdot7q^2
\frac{f_2^8f_3^{14}f_{12}^4}
     {f_1^{22}f_4^2f_6^3}
\\
&+
3^6q
\frac{f_3^{10}f_6^{12}}
     {f_1^{10}f_2f_4^{11}f_{12}}
-
3^6q^2
\frac{f_3^8f_6^{13}}
     {f_1^{20}f_{12}^2}
+
3^6q^3
\frac{f_3^6f_6f_{12}^6}
     {f_1^{14}}
\pmod{2187}.
\end{align*}
Thus
\begin{equation*}
F(q)\equiv81H(q)\pmod{2187},
\end{equation*}
where
\begin{align*}
H(q)
=&
13
\frac{f_2^5f_3^3f_6^{10}}
     {f_1^{13}f_4^2f_{12}^4}
-
\frac{f_2^{18}f_3^8}
     {f_1^{20}f_4^6f_6}
+
q
\frac{f_3^4f_4^2f_6^5}
     {f_1^8f_2^4}
-
3q^2
\frac{f_2^7f_3^7f_{12}^4}
     {f_1^{17}f_4^2}
-
12
\frac{f_2^{12}f_3^7f_6}
     {f_1^{17}f_4^3f_{12}}
     \\
&+
6q
\frac{f_2^6f_3^7f_6^7}
     {f_1^{17}f_4^3f_{12}}
+
12q
\frac{f_2^{14}f_3^{12}f_6^3}
     {f_1^{24}f_4^6}
+
21q^2
\frac{f_2^8f_3^{14}f_{12}^4}
     {f_1^{22}f_4^2f_6^3}
+
9q
\frac{f_3^{10}f_6^{12}}
     {f_1^{10}f_2f_4^{11}f_{12}}
\\
&-
9q^2
\frac{f_3^8f_6^{13}}
     {f_1^{20}f_{12}^2}
+
9q^3
\frac{f_3^6f_6f_{12}^6}
     {f_1^{14}}.
\end{align*}

We continue to use the above $(p,k)$-parametrizations, where
\begin{equation*}
p=
2q\frac{f_2^3f_3^3f_{12}^6}
        {f_1f_4^2f_6^9},
\qquad
k=
\frac{f_1^2f_4^2f_6^{15}}
     {f_2^5f_3^6f_{12}^6}.
\end{equation*}

We define two auxiliary $q$-product quotients $M$ and $R$ as
\begin{equation*}
M:=
\frac{f_2^5f_3^3f_6^{10}}
     {f_1^{13}f_4^2f_{12}^4},
\qquad
R:=
\frac{f_2^4f_3^3f_{12}^3}
     {f_1f_4f_6^8}.
\end{equation*}


We need the following lemmas to complete the proof.
\begin{lemma}\label{lem:H-decomp}
We have
\begin{equation*}
H(q)=M\left(13+RA(p)+R^2B(p)\right),
\end{equation*}
where
\begin{align*}
A(p)
={}&
\frac{2p+1}{p-1}
-
\frac{12p(p+1)^2}
     {(p-1)^3(p+2)}
-
\frac{21p^2(p+1)^2}
     {4(p-1)^3(2p+1)}
+
\frac{144p(p+1)^4}
     {(p+2)^5(2p+1)^2}
\end{align*}
and
\begin{align*}
B(p)
={}&
\frac{p}{2(2p+1)^2}
-
\frac{3p^2}
     {4(p-1)^2(2p+1)}
+
\frac{12}{p-1}
+
\frac{6p(p+1)}
     {(p-1)^2(p+2)(2p+1)}
\\
&-
\frac{9p^2(p+1)^2}
     {2(p-1)^4(p+2)(2p+1)^2}
+
\frac{9p^3}
     {8(p-1)^2(2p+1)^2}.
\end{align*}
\end{lemma}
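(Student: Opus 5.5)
The identity is an algebraic statement about eta quotients. The plan is to divide $H(q)$ by $M$ term by term, write each of the eleven quotients as a monomial in $R$ times a rational function of $p$ using the $(p,k)$-parametrization, and then regroup.

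\textbf{Exponent bookkeeping.} For each of the eleven summands of $H(q)$, I form the ratio (summand)$/M$. Its exponent vector in $(f_1,f_2,f_3,f_4,f_6,f_{12})$ is an integer combination of three vectors:
\begin{itemize}
\item the exponent vector of $R$;
\item the exponent vector of $p/q$, namely $(-1,3,3,-2,-9,6)$;
\item the exponent vector of $k$, namely $(2,-5,-6,2,15,-6)$.
\end{itemize}
The residual part is a quotient whose weight and $q$-order vanish. For example:
\begin{itemize}
\item The second term gives $\frac{f_2^{13}f_3^5f_{12}^4}{f_1^{7}f_4^4f_6^{11}}$. This should equal $R\cdot(\text{function of }p)$, contributing the $\frac{2p+1}{p-1}$ piece of $A$ up to sign.
\item The third term has a $q$ factor. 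It is absorbed through $p$ and $R$, and the terms with $q^2,q^3$ produce $R^2$.
\end{itemize}
So I first decide for each term whether it belongs to the $R^1$ group or the $R^2$ group. That choice is forced by matching the $q$-power and the overall weight: $R$ and $M$ have weights $0$ and $-1/2$, and every summand of $H$ has the same weight as $M$.

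\textbf{Substitution.} Having isolated, for each term, a power of $R$ times an eta quotient $Q_j$ of weight $0$ and trivial $q$-order, I substitute the parametrizations of $f_1,f_2,f_3,f_4,f_6,f_{12}$ stated above. Since weight $0$ means total $k$-exponent $0$, the powers of $k$ and $q$ cancel, leaving $2^{\alpha}p^{\beta}(1-p)^{\gamma}(1+p)^{\delta}(1+2p)^{\epsilon}(2+p)^{\zeta}$. The fractional exponents should become integers after dividing out the corresponding power of $R$, which itself parametrizes as $R=c\,p^{a}(1-p)^{b}\cdots$. A mismatch here would signal a misassignment in the first step.

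\textbf{Collection and main obstacle.} I then collect the resulting rational functions, with their coefficients $-1,1,-3,-12,6,12,21,9,-9,9$, into $A(p)$ and $B(p)$, simplifying signs such as $(1-p)$ versus $(p-1)$. The main obstacle is the bookkeeping: getting each fractional exponent to cancel correctly and each term into the right group. An honest risk is that one term's natural representation uses $R^0$ or a mixture. Then I would rewrite it using $R^2=R\cdot R$ together with the explicit expression of $R$ in $p$, since $R$ itself is a rational-power function of $p$, which gives freedom in splitting between $A$ and $B$. As a final safeguard, I would check the identity by expanding both sides in $q$ to a modest order, or by verifying via Garvan's \texttt{provemodfuncGAMMA0id} on $\Gamma_0(12)$ as in the proof of Lemma \ref{Lemma for mod 8}.
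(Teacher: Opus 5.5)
Your plan is the paper's proof. The paper writes $H=13E_1-E_2+E_3-3E_4-12E_5+6E_6+12E_7+21E_8+9E_9-9E_{10}+9E_{11}$, evaluates each weight-zero quotient $E_i/M$ by the $(p,k)$-parametrization, finds that for $i\ge2$ it is exactly $R$ or $R^2$ times a rational function of $p$ (the $R$-terms are $E_2,E_7,E_8,E_9$), and collects.

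Three of your side remarks are wrong, though none affects the argument:
\begin{itemize}
\item The power of $R$ is not forced by weight or $q$-power. It is set by the common fractional part ($\tfrac13$ or $\tfrac23$) of the exponents of $1-p$, $1+2p$ and $2+p$.
\item Your rule that the $q^2$ and $q^3$ terms give $R^2$ fails: the $q^2$-term $E_8$ is an $R$-term, and the $q$-free $E_5$ is an $R^2$-term.
\item Your fallback does not work. Since $R^3=\tfrac12(1-p)(1+2p)(2+p)$ is not a cube in $\mathbb{Q}(p)$, the elements $1,R,R^2$ are linearly independent over $\mathbb{Q}(p)$. So you cannot shift terms between $A$ and $B$ or absorb an extra $R^0$ term. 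This is never needed, because only $E_1$ is an $R^0$ term.
\end{itemize}
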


\begin{proof}
We set
\begin{equation}\label{h(q)}
H(q):=13E_1-E_2+E_3-3E_4-12E_5+6E_6+12E_7+21E_8+9E_9-9E_{10}+9E_{11}.
\end{equation}
Here $E_1=M$. Using the $(p,k)$-parametrization, we have
\begin{align*}
\frac{E_1}{M}&=1,\\ \\
\frac{E_2}{M}
&= \frac{f_2^{13}f_3^5f_{12}^4}
     {f_1^7f_4^4f_6^{11}} =
2^{-1/3}
(1-p)^{-2/3}
(1+2p)^{4/3}
(2+p)^{1/3}
\\
&=
-R\frac{2p+1}{p-1},\\ \\
\frac{E_3}{M} &= q\,
\frac{f_1^5f_3f_4^4f_{12}^4}
     {f_2^9f_6^5}
=
2^{-5/3}p
(1-p)^{2/3}
(1+2p)^{-4/3}
(2+p)^{2/3}
\\
&=
R^2\frac{p}{2(2p+1)^2},\\ \\
\frac{E_4}{M} &= q^2
\frac{f_2^2f_3^4f_{12}^8}
     {f_1^4f_6^{10}}
=
2^{-8/3}p^2
(1-p)^{-4/3}
(1+2p)^{-1/3}
(2+p)^{2/3}
\\
&=
R^2
\frac{p^2}{4(p-1)^2(2p+1)},\\ \\
\frac{E_5}{M} &= \frac{f_2^7f_3^4f_{12}^3}
     {f_1^4f_4f_6^9}
=
2^{-2/3}
(1-p)^{-1/3}
(1+2p)^{2/3}
(2+p)^{2/3}
\\
&=
-\frac{R^2}{p-1},\\ \\
\frac{E_6}{M} &= q\,
\frac{f_2f_3^4f_{12}^3}
     {f_1^4f_4f_6^3}
=
2^{-2/3}p(1+p)
(1-p)^{-4/3}
(1+2p)^{-1/3}
(2+p)^{-1/3}
\\
&=
R^2
\frac{p(p+1)}
     {(p-1)^2(p+2)(2p+1)},\\ \\
\frac{E_7}{M} &= q\,
\frac{f_2^9f_3^9f_{12}^4}
     {f_1^{11}f_4^4f_6^7}
=
2^{-1/3}p(1+p)^2
(1-p)^{-8/3}
(1+2p)^{1/3}
(2+p)^{-2/3}
\\
&=
-R
\frac{p(p+1)^2}
     {(p-1)^3(p+2)},\\ \\
\frac{E_8}{M} &= q^2
\frac{f_2^3f_3^{11}f_{12}^8}
     {f_1^9f_6^{13}}
=
2^{-7/3}p^2(1+p)^2
(1-p)^{-8/3}
(1+2p)^{-2/3}
(2+p)^{1/3}
\\
&=
-R
\frac{p^2(p+1)^2}
     {4(p-1)^3(2p+1)},\\ \\
\frac{E_9}{M} &= q\,
\frac{f_1^3f_3^7f_6^2f_{12}^3}
     {f_2^6f_4^9}
=
2^{11/3}p(1+p)^4
(1-p)^{1/3}
(1+2p)^{-5/3}
(2+p)^{-14/3}
\\
&=
R
\frac{16p(p+1)^4}
     {(p+2)^5(2p+1)^2},\\ \\
\frac{E_{10}}{M} &= q^2
\frac{f_3^5f_4^2f_6^3f_{12}^2}
     {f_1^7f_2^5}
=
2^{-5/3}p^2(1+p)^2
(1-p)^{-10/3}
(1+2p)^{-4/3}
(2+p)^{-1/3}
\\
&=
R^2
\frac{p^2(p+1)^2}
     {2(p-1)^4(p+2)(2p+1)^2},\\ \\
\frac{E_{11}}{M} &= q^3
\frac{f_3^3f_4^2f_{12}^{10}}
     {f_1f_2^5f_6^9}
=
2^{-11/3}p^3
(1-p)^{-4/3}
(1+2p)^{-4/3}
(2+p)^{2/3}
\\
&=
R^2
\frac{p^3}
     {8(p-1)^2(2p+1)^2}.
\end{align*}

Substituting the above values in \eqref{h(q)} gives
\begin{equation*}
H(q)=M\left(13+RA(p)+R^2B(p)\right),
\end{equation*}
which proves the lemma.
\end{proof}


\begin{lemma}\label{lem:R-cubic}
Let $a:=1-p$, then
\begin{equation*}
R^3=a^3+\frac92pa.
\end{equation*}
\end{lemma}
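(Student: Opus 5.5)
The plan is to compute $R$ directly from the $(p,k)$-parametrization, in the same way each quotient $E_i/M$ was computed in the proof of Lemma \ref{lem:H-decomp}, and then to cube. Write
\[
R=\frac{f_2^4f_3^3f_{12}^3}{f_1f_4f_6^8}.
\]
Substitute the expressions for $f_1,f_2,f_3,f_4,f_6,f_{12}$ in terms of $q,p,k$. Then collect, factor by factor, the exponents of $2$, $q$, $p$, $k$, $(1-p)$, $(1+p)$, $(1+2p)$ and $(2+p)$.

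The exponent bookkeeping goes as follows.
\begin{itemize}
\item Powers of $2$: $-\tfrac43-\tfrac12-2+\tfrac16+\tfrac23+\tfrac83=-\tfrac13$.
\item Powers of $q$, in $24$ths: $-8-9-36+1+4+48=0$. Hence the power of $p$ is also $0$, since $p$ always enters with the opposite exponent to $q$.
\item Powers of $k$: $2+\tfrac32+\tfrac32-\tfrac12-\tfrac12-4=0$.
\item Powers of $(1-p)$: $1+\tfrac12+\tfrac18-\tfrac12-\tfrac18-\tfrac23=\tfrac13$.
\item Powers of $(1+p)$, in $24$ths: $8+36+9-4-1-48=0$.
\item Powers of $(1+2p)$ and of $(2+p)$: each works out to $\tfrac13$.
\end{itemize}
Thus I expect
\[
R=2^{-1/3}(1-p)^{1/3}(1+2p)^{1/3}(2+p)^{1/3},
\qquad\text{so}\qquad
R^3=\frac{(1-p)(1+2p)(2+p)}{2}.
\]
This is consistent with the forms appearing in Lemma \ref{lem:H-decomp}. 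For instance, $E_5/M=-R^2/(p-1)$ requires exactly that $R^2$ carry the factor $2^{-2/3}(1-p)^{2/3}(1+2p)^{2/3}(2+p)^{2/3}$.

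The remaining step is elementary algebra. Put $a=1-p$, so that $p=1-a$. Then
\[
(1+2p)(2+p)=2+5p+2p^2=9-9a+2a^2,
\]
and therefore
\[
R^3=\frac{a(9-9a+2a^2)}{2}=a^3+\frac{9a(1-a)}{2}=a^3+\frac92pa,
\]
which is the claim.

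The only real obstacle is the exponent arithmetic for the eight factors, where a sign or denominator slip is easy. I would double-check it in two ways: by confirming that the $q$ and $k$ exponents cancel (as they must, since $R$ has weight zero and leading term $1$), and by comparing the first few $q$-coefficients of $R^3$ with those of $\tfrac12(1-p)(1+2p)(2+p)$, using $p=2q\,f_2^3f_3^3f_{12}^6/(f_1f_4^2f_6^9)$. For the latter check, note that as $q\to0$ we have $p\to0$ and $R\to1$, matching $R^3\to\tfrac12\cdot1\cdot1\cdot2=1$.
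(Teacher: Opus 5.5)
Your proposal is correct and follows the paper's own route: substitute the $(p,k)$-parametrization to obtain $R^3=\tfrac12(1-p)(1+2p)(2+p)$, then rewrite in terms of $a=1-p$. Your exponent bookkeeping is right (the paper only states the result), and your substitution $p=1-a$ is equivalent to the paper's final step $\tfrac{a(2+5p+2p^2)}{2}=a^3+\tfrac92pa$.
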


\begin{proof}
Using the $(p,k)$-parametrizations, we have
\begin{align*}
R^3&=\frac{(1-p)(1+2p)(2+p)}{2}=\frac{a(2+5p+2p^2)}{2}=a^3+\frac92pa,
\end{align*}
which completes the proof.
\end{proof}


\begin{lemma}\label{lem:R-mod9}
Let $a:=1-p$, then
\begin{equation*}
R\equiv a+\frac{3p}{2a}\pmod9.
\end{equation*}
\end{lemma}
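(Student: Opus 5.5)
The plan is to deduce the congruence from the exact cubic relation of Lemma \ref{lem:R-cubic} by a Hensel-type lifting argument in the ring $\mathbb{Z}_{(3)}[[q]]$. Here $\mathbb{Z}_{(3)}$ denotes the rationals with denominator prime to $3$, so that $1/2$ is a unit and congruences modulo $9$ and $27$ make sense. All quantities involved lie in this ring:
\begin{itemize}
\item $R=\frac{f_2^4f_3^3f_{12}^3}{f_1f_4f_6^8}$ is an eta-quotient with integer coefficients and constant term $1$;
\item $p=2q\frac{f_2^3f_3^3f_{12}^6}{f_1f_4^2f_6^9}$ has integer coefficients and no constant term;
\item $a=1-p$ therefore has constant term $1$, so $a$ is invertible and $1/a$ has integer coefficients.
\end{itemize}

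Set $S:=a+\frac{3p}{2a}$. First I expand $S^3$ by the binomial theorem:
\begin{equation*}
S^3=a^3+\frac{9}{2}pa+\frac{27p^2}{4a}+\frac{27p^3}{8a^3}.
\end{equation*}
Comparing with Lemma \ref{lem:R-cubic} gives $R^3\equiv S^3\pmod{27}$, and hence also $R^3\equiv S^3\pmod{3}$.

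Next I show $R\equiv S\pmod 3$. Reduced modulo $3$, the coefficient ring becomes $\mathbb{F}_3$. For $g(q)\in\mathbb{F}_3[[q]]$ the cube map is $g(q)\mapsto g^{(\sigma)}(q^3)$, where $\sigma$ is the Frobenius on coefficients; on $\mathbb{F}_3$ this is just $g(q)\mapsto g(q^3)$. This map is injective, so $R^3\equiv S^3\pmod 3$ forces $R\equiv S\pmod 3$. Write $R=S+3T$ with $T\in\mathbb{Z}_{(3)}[[q]]$.

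Now I lift to modulus $9$. Expanding,
\begin{equation*}
R^3=S^3+9S^2T+27ST^2+27T^3\equiv S^3+9S^2T\pmod{27}.
\end{equation*}
Together with $R^3\equiv S^3\pmod{27}$ this yields $9S^2T\equiv0\pmod{27}$, that is, $S^2T\equiv 0\pmod 3$. Since $S\equiv a\pmod 3$ has constant term $1$, it is a unit, so $T\equiv0\pmod 3$. Therefore $R=S+3T\equiv S\pmod 9$, which is the claim.

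The only delicate point is making sure the congruences are taken in a ring where every term is $3$-integral. This means checking that the divisions by $2$, $4$, $8$ and by $a$ introduce no denominators divisible by $3$. The integrality observations in the first paragraph settle this. The rest is the routine binomial expansion.
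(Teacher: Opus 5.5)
Your argument is correct and is essentially the paper's proof. Both lift the exact relation $R^3=a^3+\frac92pa$ of Lemma \ref{lem:R-cubic} $3$-adically: the paper writes $R=a+3T$ and solves $a^2T\equiv pa/2\pmod 3$, while you check that $S^3\equiv R^3\pmod{27}$ for the candidate $S$ and use uniqueness of the lift. Working explicitly in $\mathbb{Z}_{(3)}[[q]]$ and justifying the mod-$3$ cube-root step by injectivity of cubing on $\mathbb{F}_3[[q]]$ makes precise what the paper attributes loosely to Fermat's Little Theorem.
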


\begin{proof}
By Lemma \ref{lem:R-cubic},
\begin{equation*}
R^3\equiv a^3\pmod3,
\end{equation*}
which by the Fermat's Little Theorem \cite[Page 88]{burton2010ebook} gives
\begin{align*}
    R\equiv a\pmod3.
\end{align*}
Let
\begin{equation}
R=a+3T.\label{R=a+3t}
\end{equation}
Then
\begin{equation*}
(a+3T)^3=a^3+\frac92pa
\end{equation*}
and hence
\begin{equation*}
a^2T+3aT^2+3T^3
=
\frac{pa}{2}.
\end{equation*}
Working modulo $3$ in the above, we obtain
\begin{equation*}
a^2T\equiv\frac{pa}{2}\pmod3,
\end{equation*}
which gives
\begin{equation*}
T\equiv\frac{p}{2a}\pmod3.
\end{equation*}
We write
\begin{equation*}
T:=\frac{p}{2a}+3U.
\end{equation*}
Therefore \eqref{R=a+3t} becomes
\begin{equation*}
R=a+\frac{3p}{2a}+9U,
\end{equation*}
which modulo 9 proves the lemma.
\end{proof}


\begin{lemma}\label{lem:R-mod27}
Let
\begin{align*}
\rho(p)
&=
a+\frac{3p}{2a}
-\frac{9p^2(2a^2+p)}{8a^5},\\
Q(p)
&=
8p^6-36p^5+54p^4-61p^3
+54p^2-36p+8.
\end{align*}
Then
\begin{align*}
R&\equiv\rho(p)\pmod{27},\\
\rho(p)
&=
-\frac{Q(p)}{8(p-1)^5}.
\end{align*}
\end{lemma}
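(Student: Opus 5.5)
The plan is one more step of the Hensel-type lifting used in the proof of Lemma \ref{lem:R-mod9}, followed by a direct polynomial check of the closed form. All congruences are taken in $\mathbb{Z}_{(3)}[[q]]$, i.e.\ power series whose coefficients are rationals with denominators prime to $3$. This ring is the right setting for three reasons:
\begin{itemize}
\item $p=2q\,f_2^3f_3^3f_{12}^6/(f_1f_4^2f_6^9)$ and $R$ are power series with integer coefficients.
\item $p$ has no constant term, so $a=1-p$ is a unit.
\item The denominators $2$, $4$, $8$ are invertible modulo every power of $3$.
\end{itemize}
Hence expressions such as $p/(2a)$ and $p^2/a^5$ make sense, and dividing an exact identity by $3$ is legitimate.

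First I lift the congruence of Lemma \ref{lem:R-mod9}. As in that proof, write $R=a+3T$ and $T=t_0+3U$ with $t_0:=p/(2a)$. Lemma \ref{lem:R-cubic} gives the exact relation
\begin{equation*}
a^2T+3aT^2+3T^3=\frac{pa}{2}.
\end{equation*}
Since $a^2t_0=pa/2$ exactly, substituting $T=t_0+3U$ and dividing by $3$ gives the exact identity
\begin{equation*}
a^2U+a(t_0+3U)^2+(t_0+3U)^3=0.
\end{equation*}
Reducing modulo $3$ yields
\begin{equation*}
a^2U\equiv -\bigl(at_0^2+t_0^3\bigr)\pmod 3.
\end{equation*}
Inserting $t_0=p/(2a)$ gives
\begin{equation*}
U\equiv -\frac{p^2}{4a^3}-\frac{p^3}{8a^5}=-\frac{p^2(2a^2+p)}{8a^5}\pmod 3.
\end{equation*}
Therefore
\begin{equation*}
R=a+\frac{3p}{2a}+9U\equiv a+\frac{3p}{2a}-\frac{9p^2(2a^2+p)}{8a^5}=\rho(p)\pmod{27}.
\end{equation*}

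Second, the closed form for $\rho(p)$ is a pure rational-function identity. Put $\rho$ over the common denominator $8a^5$. Its numerator is
\begin{equation*}
N=8a^6+12pa^4-18p^2a^2-9p^3 .
\end{equation*}
Expanding with $a=1-p$, I expect the $p$-coefficients to combine as follows:
\begin{itemize}
\item constant: $8$;
\item $p$: $-48+12=-36$;
\item $p^2$: $120-48-18=54$;
\item $p^3$: $-160+72+36-9=-61$;
\item $p^4$: $120-48-18=54$;
\item $p^5$: $-48+12=-36$;
\item $p^6$: $8$.
\end{itemize}
This is exactly $Q(p)$. Since $a^5=-(p-1)^5$, it follows that $\rho(p)=-Q(p)/\bigl(8(p-1)^5\bigr)$.

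The only delicate point is the justification of working modulo $27$ with the fractional expressions: the reduction is an exact identity divided by $3$, and it involves only the units $a$ and $2$. Beyond that bookkeeping the argument is routine algebra, and the expansion of $N$ is mechanical.
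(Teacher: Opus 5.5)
Your proposal is correct and follows the paper's proof essentially step for step. You use the same second lift $T=t_0+3U$ from the exact relation $a^2T+3aT^2+3T^3=pa/2$, the same reduction $a^2U\equiv-(at_0^2+t_0^3)\pmod 3$, and the same expansion of $8a^6+12pa^4-18p^2a^2-9p^3$ into $Q(p)$. Your remark that everything lives in $\mathbb{Z}_{(3)}[[q]]$, where $a=1-p$ and $2$ are units, also makes explicit the $3$-integrality bookkeeping that the paper leaves implicit.
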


\begin{proof}
Suppose that $\displaystyle{T_0:=\frac{p}{2a}}$, then $\displaystyle{a^2T_0=\frac{pa}{2}}$ and from the proof of Lemma \ref{lem:R-mod9},
\begin{equation*}
T=T_0+3U
\end{equation*}
and
\begin{equation*}
a^2(T_0+3U)
+
3a(T_0+3U)^2
+
3(T_0+3U)^3
=
\frac{pa}{2},
\end{equation*}
which gives
\begin{equation*}
a^2U
+
a(T_0+3U)^2
+
(T_0+3U)^3
=
0,
\end{equation*}
which under modulo $3$ reduces to
\begin{equation*}
a^2U+aT_0^2+T_0^3
\equiv0\pmod3.
\end{equation*}
Therefore
\begin{align*}
U&\equiv
-\frac{p^2(2a^2+p)}
       {8a^5}
\pmod3.
\end{align*}
Thus
\begin{equation*}
9U
\equiv
-\frac{9p^2(2a^2+p)}
       {8a^5}
\pmod{27}.
\end{equation*}
From the proof of the last lemma and the above, we arrive at
\begin{equation*}
R
\equiv
a+\frac{3p}{2a}
-\frac{9p^2(2a^2+p)}
       {8a^5}
\pmod{27},
\end{equation*}
which proves
\begin{equation*}
R\equiv\rho(p)\pmod{27}.
\end{equation*}

Since $a=1-p$,
\begin{align*}
\rho(p)
&=
1-p
+
\frac{3p}{2(1-p)}
-
\frac{9p^2(2(1-p)^2+p)}
       {8(1-p)^5}
\\
&=
-\frac{
8p^6-36p^5+54p^4-61p^3
+54p^2-36p+8
}
{8(p-1)^5},
\end{align*}
which proves
\begin{equation*}
\rho(p)
=
-\frac{Q(p)}{8(p-1)^5}.
\end{equation*}
\end{proof}


\begin{lemma}\label{lem:main-pk-cong}
We have
\begin{equation*}
13+RA(p)+R^2B(p)\equiv0\pmod{27},
\end{equation*}where $A(p)$ and $B(p)$ are defined in Lemma \ref{lem:H-decomp}.
\end{lemma}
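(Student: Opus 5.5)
The plan is to replace $R$ by its truncation modulo $27$ and reduce everything to a polynomial congruence in $p$. First I have to be clear about what ``$\equiv$'' means here. The quantities $p$, $R$, $A(p)$, $B(p)$ are all $q$-series with rational coefficients. Every denominator in $A$, $B$ and $\rho$ involves only powers of $2$ and the factors $(p-1)$, $(p+2)$, $(2p+1)$. Since $p=2q\,f_2^3f_3^3f_{12}^6/(f_1f_4^2f_6^9)$ has integer coefficients and $p\equiv 0$ at $q=0$, the series $1/(p-1)$, $1/(p+2)$, $1/(2p+1)$ lie in $\mathbb{Z}_{(3)}[[q]]$: their constant terms are $-1$, $2$, $1$, all $3$-adic units. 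So all manipulations take place in the ring $\mathbb{Z}_{(3)}[[q]]$, and congruences modulo $27$ make sense there.

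Next I use Lemma \ref{lem:R-mod27} to write $R\equiv\rho(p)\pmod{27}$, with $\rho(p)=-Q(p)/(8(p-1)^5)$. This gives
\begin{equation*}
13+RA(p)+R^2B(p)\equiv 13+\rho(p)A(p)+\rho(p)^2B(p)\pmod{27}.
\end{equation*}
The left-hand side is now a rational function $N(p)/D(p)$, with
\begin{equation*}
D(p)=2^{m}(p-1)^{r}(p+2)^{s}(2p+1)^{t}
\end{equation*}
for suitable exponents. Clearing the common denominator $D$ (a $3$-adic unit in $\mathbb{Z}_{(3)}[[q]]$), it suffices to show that the numerator polynomial $N(p)\in\mathbb{Z}[\tfrac12][p]$ has all coefficients divisible by $27$.

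A plain-coefficient statement may be too strong, so I also prepare a refinement. Congruences among the factors hold automatically modulo $3$:
\begin{equation*}
p+2\equiv p-1,\qquad 2p+1\equiv -(p-1)\pmod 3.
\end{equation*}
It is therefore natural to substitute $p=1-a$ and then
\begin{equation*}
p+2=3-a,\qquad 2p+1=3-2a,
\end{equation*}
and expand in powers of $3$. If $N$ is not coefficientwise divisible by $27$, I would reduce it modulo the relation coming from Lemma \ref{lem:R-cubic}. That relation is equivalent to $(1-p)(1+2p)(2+p)=2R^3$ and is compatible with the truncation. Concretely, I would check divisibility of $N(p)$ by $27$ in $\mathbb{Z}_{(3)}[p]$ after using $\rho^3\equiv a^3+\tfrac92pa\pmod{27}$ to lower the degree in $\rho$, should I keep $\rho$ symbolic instead of substituting it.

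So I have two routes. The cleanest is to keep $R$ symbolic: use $R^3=a^3+\tfrac92 pa$ exactly to write $R^2B$ and $RA$ in the basis $1,R,R^2$ over $\mathbb{Q}(p)$. Then I would show that each coefficient is $\equiv 0\pmod{27}$ once $R$ is replaced via Lemma \ref{lem:R-mod27}. In practice, the decisive computation is the expansion
\begin{equation*}
8^2(p-1)^{10}\,D\,\bigl(13+\rho A+\rho^2B\bigr),
\end{equation*}
followed by reduction of its coefficients modulo $27$. I would do this with computer algebra, as the paper does elsewhere with Maple, and then record the resulting factorization showing the factor $27$.

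The main obstacle is that final polynomial verification. The degrees are large: $\rho^2$ has denominator $(p-1)^{10}$ and $B$ contributes up to $(p-1)^4(2p+1)^2(p+2)$. A genuine sign or coefficient slip in $A$ or $B$ would make the congruence fail. So before expanding, I would first sanity-check the congruence numerically as $q$-series to many terms. If coefficientwise divisibility of $N$ fails, I would first try to exhibit $N$ as $27\,G(p)+(\text{multiple of }3)\cdot(\text{expression vanishing modulo }9)$. Such an expression would be built from $(p+2)-(p-1)=3$ and $(2p+1)+(p-1)=3p$, which supply extra powers of $3$.
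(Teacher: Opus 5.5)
Your plan is correct and is essentially the paper's proof: replace $R$ by $\rho(p)=-Q(p)/(8(p-1)^5)$ modulo $27$, multiply by the $3$-adic unit $D(p)=512(p-1)^{14}(p+2)^5(2p+1)^2$ (your factor $8^2(p-1)^{10}$ times the common denominator of $A$ and $B$), and show that the resulting numerator polynomial is divisible by $27$. The paper carries out this expansion and finds $N(p)=27p\cdot G(p)$ with $G\in\mathbb{Z}[p]$ of degree $21$, so plain coefficientwise divisibility already holds and your fallback via the cubic relation of Lemma \ref{lem:R-cubic} is not needed.
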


\begin{proof}
Using Lemma \ref{lem:R-mod27}, we have
\begin{equation*}
RA(p)\equiv\rho(p)A(p)\pmod{27}
\end{equation*}
and
\begin{equation*}
R^2B(p)\equiv\rho(p)^2B(p)\pmod{27}.
\end{equation*}
Therefore
\begin{equation*}
13+RA(p)+R^2B(p)
\equiv
13+\rho(p)A(p)+\rho(p)^2B(p)
\pmod{27}.
\end{equation*}

We set
\begin{equation*}
A(p):=-\frac{P_1(p)}{4(p-1)^3(p+2)^5(2p+1)^2},
\end{equation*}
where
\begin{align*}
P_1(p)
={}&
10p^{10}
+413p^9
+3270p^8
+16161p^7
+42570p^6
\\
&+
60996p^5
+49224p^4
+22512p^3
+5664p^2
+512p
-128,
\end{align*}
and
\begin{equation*}
B(p):=\frac{P_2(p)}{8(p-1)^4(p+2)(2p+1)^2},
\end{equation*}
where
\begin{equation*}
P_2(p)
=
385p^6
+82p^5
-1523p^4
+500p^3
+836p^2
-232p
-192.
\end{equation*}
Therefore, using the last lemma, we obtain
\begin{align*}
13+\rho A+\rho^2B
={}&
13
+
\frac{Q(p)P_1(p)}
{32(p-1)^8(p+2)^5(2p+1)^2}
\\
&+
\frac{Q(p)^2P_2(p)}
{512(p-1)^{14}(p+2)(2p+1)^2}.
\end{align*}

We take $D(p):=512(p-1)^{14}(p+2)^5(2p+1)^2$, then
\begin{align*}
D(p)\left(13+\rho A+\rho^2B\right)
&=
6656(p-1)^{14}(p+2)^5(2p+1)^2
\\
&\quad
+
16(p-1)^6Q(p)P_1(p)
+
(p+2)^4Q(p)^2P_2(p)
\\
&=:N(p).
\end{align*}
Simplifying,
\begin{align*}
N(p)
=
27p\Bigl(
&960p^{21}
+1728p^{20}
-22512p^{19}
+1440p^{18}
+61860p^{17}
\\
&+
236132p^{16}
-670065p^{15}
-34970p^{14}
+1807995p^{13}
\\
&-
2493752p^{12}
-33448p^{11}
+4648744p^{10}
-5672512p^9
\\
&-
301344p^8
+6442432p^7
-5088768p^6
-109312p^5
\\
&+
1989120p^4
-827392p^3
-32768p^2
+69632p
-8192
\Bigr).
\end{align*}
Therefore
\begin{equation*}
D(p)\left(13+\rho A+\rho^2B\right)
\equiv0\pmod{27}.
\end{equation*}

Observe that
\begin{equation*}
D(0)
=
512(-1)^{14}(2)^5
=
2^{14},
\end{equation*}
which gives
\begin{equation*}
3\nmid D(0).
\end{equation*}
Therefore, $D(p)$ is a unit in $\mathbb{Z}_3[[p]]$. Thus
\begin{equation*}
13+\rho A+\rho^2B
\equiv0\pmod{27},
\end{equation*}
which proves the lemma.
\end{proof}

By Lemma \ref{lem:H-decomp}, we have
\begin{equation*}
H(q)
=
M\left(13+RA(p)+R^2B(p)\right).
\end{equation*}
Again, by Lemma \ref{lem:main-pk-cong}, we have
\begin{equation*}
13+RA(p)+R^2B(p)
\equiv0\pmod{27}.
\end{equation*}
Therefore
\begin{equation*}
H(q)\equiv0\pmod{27}.
\end{equation*}
As a consequence,
\begin{align*}
F(q)&\equiv81H(q)\pmod{2187}\\
&\equiv 0\pmod{2187}.
\end{align*}
Thus
\begin{equation*}
\sum_{n=0}^{\infty}
c\phi_{18}(3n+2)q^n
\equiv0\pmod{2187}.
\end{equation*}
Comparing the coefficients, we finally arrive at
\begin{equation*}
c\phi_{18}(3n+2)
\equiv0\pmod{2187}.
\end{equation*}
This completes the proof.
\end{proof}
\section{Concluding Remarks}

    \begin{enumerate}
        \item Based on numerical evidence, we conjecture the following congruences and leave their proofs for future investigation.
    \begin{conjecture}For all $n\ge0$,
       \begin{align*}
        c\phi_{18}\left(30n+19\right)&\equiv 0\pmod{16},\\
			c\phi_{18}\left(30n+25\right)&\equiv 0\pmod{16}.
    \end{align*} 
    \end{conjecture}
    \end{enumerate}
    
\section*{Statements and Declarations}
    
    \textbf{Data Availability} This manuscript does not contain any associated data.\\
	\textbf{Competing Interests} The authors declare that they have no competing interests.\\
	\textbf{Funding Information} No funding was received for the preparation of this manuscript.

	\bibliographystyle{alpha}
	\bibliography{ref}
	
\end{document}